\DeclareSymbolFont{cyrletters}{OT2}{wncyr}{m}{n}
\DeclareMathSymbol{\Sha}{\mathalpha}{cyrletters}{"58}
\newtheorem{thm}{Theorem}
\newtheorem{lemma}[thm]{Lemma}
\newtheorem{prop}[thm]{Proposition}
\newtheorem{cor}[thm]{Corollary}
\theoremstyle{definition}
\newtheorem{remark}[thm]{Remark}
\newtheorem{notation}[thm]{Notation}
\newtheorem{defn}[thm]{Definition}
\newtheorem{example}[thm]{Example}
\newcommand{\mbb}[1]{\mathbb #1}
\newcommand{\mc}[1]{\mathcal #1}
\newcommand{\ms}[1]{\mathscr #1}
\newcommand{\oper}[1]{\operatorname{#1}}
\newcommand{\wh}{\widehat}
\newcommand{\til}{\widetilde}
\newcommand{\Spec}{\oper{Spec}}
\newcommand{\Ind}{\oper{Ind}}
\newcommand{\Gal}{\oper{Gal}}
\newcommand{\Hom}{\oper{Hom}}
\newcommand{\End}{\oper{End}}
\newcommand{\Aut}{\oper{Aut}}
\newcommand{\sep}{\mathrm{sep}}
\newcommand{\cha}{\oper{char}}
\newcommand{\Frac}{\oper{frac}}
\newcommand{\h}{^{\oper{h}}}
\newcommand{\red}{^{\oper{red}}}
\newcommand{\Obj}{\oper{Obj}}
\def\<{\left<}
\def\>{\right>}
\newcounter{itemcounter}
\title{Local-global Galois theory of arithmetic function fields}
\author{David Harbater, Julia Hartmann, Daniel Krashen,\\ Raman Parimala, Venapally Suresh}
\date{}
\numberwithin{thm}{section}
\begin{document}
\maketitle

\begin{abstract}\let\thefootnote\relax\footnote{{\it 2010
Mathematics Subject Classification Codes}: 
Primary 12F10, 13F25, 14H25; Secondary 13B05, 14H05, 20E18.
{\it Key words and phrases}: Galois theory, function fields, discretely valued fields, curves, separable algebras, descent, patching, van Kampen theorem.}
We study the relationship between the local and global Galois theory of function fields over a complete discretely valued field.  We give necessary and sufficient conditions for local separable extensions to descend to global extensions, and for the local absolute Galois group to inject into the global absolute Galois group.  As an application we obtain a local-global principle for the index of a variety over such a function field.  In this context we also study algebraic versions of van Kampen's theorem, describing the global absolute Galois group as a direct limit of local absolute Galois groups.
\end{abstract}
 
\section{Introduction} \label{intro sec}

In this paper we relate the local and the global Galois theory of function fields $F$ of curves over a complete discretely valued field $K$.  Each such curve has a normal projective model $\mc X$ over the valuation ring $T$ of $K$.  Given a closed point $P$ on $\mc X$, one can compare the Galois theory of $F$ to that of the fraction field $F_P$ of the complete local ring $\wh R_P$ of $\mc X$ at $P$.  In particular, is every finite separable extension of $F_P$ induced by a finite separable extension of $F$?  
That is, for every finite separable field extension $E_P/F_P$, is there a finite separable field extension $E/F$ such that $E \otimes_F F_P$ is isomorphic to $E_P$?
As a related question, is the homomorphism of absolute Galois groups $\Gal(F_P) \to \Gal(F)$ an inclusion?  The answers turn out to depend on the situation.  

We show that the answer to the first question is yes if and only if the residue field $k$ of $T$ has characteristic zero and the closed fiber $X$ of $\mc X$ is unibranched at $P$; and that the answer to the second question is yes if and only if $\cha(k)=0$.  (See the paragraph preceding Theorem~\ref{point to global} for the terminology.)
In \cite{CHHKPS 0cyc}, we considered a related question: in the situation as above, let $U$ be a nonempty connected affine open subset of $X$, and let $F_U$ be the fraction field of the ring $\wh R_U$ of formal functions along $U$.  (See Notation~\ref{basic notation}.)  Then is every finite separable extension of $F_U$ induced by a finite separable extension of $F$?  There we showed that the answer to that question is always yes, regardless of characteristic.  In the current paper, we use that to show that the homomorphism $\Gal(F_U) \to \Gal(F)$ is always an inclusion.  

These results raise the question of how $\Gal(F)$ is related to the groups $\Gal(F_P)$ and $\Gal(F_U)$, if we pick a finite set $\mc P$ of closed points $P$ and let $\mc U$ be the set of connected components of the complement of $\mc P$ in $X$, such that each element of $\mc U$ is affine.  
An important situation is if the reduction graph of $\mc X$ is a tree (see the sentence before Theorem~\ref{dir lim patches thm} for the definition).  Under this hypothesis, we show that $\Gal(F)$ is a direct limit of local Galois groups.  In particular, in the special case of one unibranched point $P$ and its complement $U$, this gives an analog of van Kampen's theorem in topology.  More generally, without the tree hypothesis, we obtain a description of $\Gal(F)$ in terms of groupoids, as well as a description (via a result of J.~Stix) in terms of a maximal subtree of the reduction graph.

As an application of our descent results, we obtain a more explicit version of a local-global principle that appeared in \cite{CHHKPS 0cyc}.  That result concerned the index of a variety over $F$, and related it to its index over the fields $F_P$ and $F_U$.  The version that we prove here is in the equal characteristic zero case, and it relies on the descent results mentioned above.  

The structure of the paper is as follows:  Section~\ref{descent sec} concerns the question of descent of finite separable extensions from $F_P$ to $F$.  In Section~\ref{descent char 0} we provide a positive answer if $\cha(k)=0$ and $X$ is unibranched at $P$ (Theorem~\ref{point to global}), but show that there are always counterexamples if $X$ is not unibranched at $P$ (Remark~\ref{char p descent rks}(\ref{unibranch rk})).  In Section~\ref{lgp applic sec} we combine Theorem~\ref{point to global} and a result from \cite{CHHKPS 0cyc} to obtain an explicit local-global principle for zero-cycles on varietes over $F$ under the characteristic zero hypothesis (Corollary~\ref{prod bound}).  We show in Section~\ref{char p subsec} that if $\cha(k) = p > 0$, then there are always degree $p$ separable extensions of $F_P$ that do not descend to extensions of $F$ (Proposition~\ref{counterexample prop}). 

Section~\ref{Gal gps sec} concerns the implications for absolute Galois groups.  In Section~\ref{inj subsec} we show that $\Gal(F_P) \to \Gal(F)$ is injective if and only if $\cha(k)=0$, and that $\Gal(F_U) \to \Gal(F)$ is always injective (Theorem~\ref{inj Gal maps}).  Section~\ref{van Kampen subsec} obtains a van Kampen theorem in a simple case (Theorem~\ref{diamond vK}, for diamonds),
with generalizations given in Section~\ref{genl van Kampen subsec} (Theorem~\ref{dir lim patches thm}, for trees; and Theorem~\ref{general vK}, in terms of groupoids).

We thank Jean-Louis Colliot-Th\'el\`ene, Florian Pop, and Jakob Stix for helpful discussions.  We thank the American Institute of Mathematics for helping to facilitate this project with an AIM SQuaRE. We recently learned that G\"otz Wiesend once gave a talk that was related to questions that we are now considering in this paper.  We wish that we had had the opportunity to discuss this with him.

\section{Descent of extensions from local to global} \label{descent sec}

\subsection{Descent in characteristic zero} \label{descent char 0}

As in \cite{HH:FP}, \cite{HHK}, and \cite{HHK:refinements}, we consider the following situation:

\begin{defn}
Let $K$ be a complete discretely valued field with valuation ring $T$, uniformizer~$t$, and residue field $k$.  A {\em semi-global field} is a one-variable function field over such a field $K$.
A {\em normal model} of a semi-global field $F$ is a $T$-scheme $\mc X$ with function field $F$ that is flat and projective over $T$ of relative dimension one, and that is normal as a variety.  The {\em closed fiber} of ${\mc X}$ is $X:={\mc X}_k$.
\end{defn}

The following notation will be used throughout this manuscript. 

\begin{notation}\label{basic notation}
Let $\mc X$ be a normal model for a semi-global field $F$.
If $P$ is a (not necessarily closed) point of the closed fiber $X$, let $R_P$ be the local ring of $\mc X$ at $P$; let $\wh R_P$ be its completion with respect to the maximal ideal $\frak m_P$; and let $F_P$ be the fraction field of $\wh R_P$.  In the case that $P$ is a closed point of $X$, the {\em branches} of $X$ at $P$ are the height one prime ideals of $\wh R_P$ that contain $t$, which we can also regard as the codimension one points of $\Spec(\wh R_P)$ that lie on the closed fiber.  The localization $R_\wp$ of $\wh R_P$ at a branch $\wp$ is a discrete valuation ring; we write $\wh R_\wp$ for its completion, and $F_\wp$ for the fraction field of $\wh R_\wp$.  The contraction of $\wp$ to $F_P$ determines an irreducible component $X_0$ of $X$, whose generic point $\eta$ has the property that $F_\eta \subset F_\wp$.  We then say that $\wp$ {\em lies on} $X_0$.  Note that the residue field
$k(\wp) := \wh R_\wp/\wp \wh R_\wp = R_\wp/\wp R_\wp$ is isomorphic to the fraction field of $\wh R_P/\wp$, and hence is a complete discretely valued field.

If $U$ is a nonempty connected affine open subset of $X$, 
then we write $R_U$ for the subring of $F$ consisting of rational functions that are regular at each point of $U$.  We let $\wh R_U$ be the $t$-adic completion of $R_U$; this completion is the {\em ring of formal functions along} $U$. 
This is an integral domain by \cite[Proposition~3.4]{HHK:refinements}, and we let $F_U$ be the fraction field of $\wh R_U$.  
If $P \in U \subseteq U'$, then $\wh R_{U'} \subseteq \wh R_U \subset \wh R_P$ and $F_{U'} \subseteq F_U \subset F_P$.  We say that a branch $\wp$ of $X$ at a closed point $P \in X$ {\em lies on} $U$ if it lies on a component of the closure $\bar U$.  The field $F_U$ is then contained in $F_\wp$.  If $U$ as above is affine, then 
the absolute value on the complete discretely valued field $k(\wp)$ restricts to an absolute value on $k(\bar U\red)$, where the superscript indicates the underlying reduced scheme.  Here $k(\wp) = \Frac(\wh{\mc O}_{\bar U\red,P})$ while $k(U\red) = k(\bar U\red) = \Frac(\mc O_{\bar U\red,P})$, and so $k(U\red)$ is dense in $k(\wp)$ under the above absolute value.
\end{notation}

To illustrate the above in a simple case, consider a smooth $T$-curve $\mc X$, such as the projective line over $T$, and pick 
a closed point $P \in X$.  Let $U \subset X$ be the complement of $P$ in $X$, and let $\wp$ be the unique branch of $X$ at $P$.  We then have containments of fields $F \subset F_P,\!F_U \subset F_\wp$, forming a diamond  
\[
\xymatrix @R=.7cm @C=.7cm  {
& F_\wp &\\
F_P \ar[ur] && F_U \ar[ul]\\
& F \ar[ul] \ar[ur] &
}
\]
with $F = F_P \cap F_U$ (see~\cite[Proposition~6.3]{HH:FP}).
Given a finite separable field extension of one of these four fields, we can ask whether it is induced by base change from an extension of the smaller fields.  More generally, we may have more complicated configurations of fields (see Notation~\ref{geom notn} below), but we can still ask this question.  Note that if a field extension $E$ of a larger field is shown to be induced by an \'etale algebra $A$ over a smaller field, then $A$ is automatically a (separable) field extension of the smaller field, because it is contained in $E$.  

Along these lines, the following results were proven
in \cite{CHHKPS 0cyc} (Propositions 2.3 and 2.4 there), 
in connection with obtaining local-global principles for zero-cycles on varieties over $F$.  These results in particular concern two of the four edges of the above diamond in the special case of the above simple example.

\begin{prop} \label{branch to point}
Let $F$ be a semi-global field with normal model $\mc X$ and closed fiber $X$. Let $P$ be a closed point of $X$, let $\wp$ be a branch of $X$ at~$P$, and
let $E_\wp$ be a finite separable field extension of $F_\wp$, say of degree $n$.  Then 
there exists a finite separable field extension $E_P$ of $F_P$ such that 
$E_P \otimes_{F_P} F_\wp\cong E_\wp$ as extensions of~$F_\wp$, and such that 
$E_P$ induces the trivial \'etale algebra $F_{\wp'}^{\oplus n}$ over $F_{\wp'}$ for every other branch
$\wp'$ at~$P$.
\end{prop}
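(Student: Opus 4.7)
The plan is to construct a monic polynomial $f(x) \in F_P[x]$ of degree $n$ that approximates the minimal polynomial of $E_\wp$ at the branch $\wp$ while approximating a totally split polynomial at each other branch at $P$, and then take $E_P := F_P[x]/(f)$. By the primitive element theorem, write $E_\wp = F_\wp(\alpha)$ with $\alpha$ a root of an irreducible separable monic polynomial $f_\wp \in F_\wp[x]$ of degree $n$. Krasner's lemma provides an open neighborhood $\mc N_\wp \subset F_\wp^n$ of the coefficient vector of $f_\wp$ such that every monic degree-$n$ polynomial with coefficient vector in $\mc N_\wp$ is separable and irreducible over $F_\wp$ and defines an extension isomorphic to $E_\wp$. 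For each other branch $\wp'$ at $P$, choose $n$ distinct elements $c_1,\ldots,c_n \in F_{\wp'}$ (possible since $F_{\wp'}$ is infinite) and form $g_{\wp'}(x) = \prod_j (x - c_j)$; applying Hensel's lemma at each $c_j$ yields an open neighborhood $\mc N_{\wp'} \subset F_{\wp'}^n$ of the coefficient vector of $g_{\wp'}$ such that every monic polynomial with coefficients in $\mc N_{\wp'}$ splits into $n$ distinct linear factors over $F_{\wp'}$.

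The next step is to produce coefficients $a_0, \ldots, a_{n-1} \in F_P$ whose image lies in $\mc N_\wp$ at $\wp$ and in $\mc N_{\wp'}$ at every other branch $\wp'$. The branches at $P$ are the distinct height-one primes of $\wh R_P$ containing $t$, so the associated discrete valuations on $F_P$ are pairwise inequivalent and nontrivial. Moreover, $F_P$ is dense in each $F_{\wp_i}$, because $F_P$ contains the localization $R_{\wp_i}$, which is dense in its completion $\wh R_{\wp_i}$. Standard weak approximation for inequivalent valuations on a field then implies that the diagonal map $F_P \to \prod_i F_{\wp_i}$ has dense image, so suitable $a_0, \ldots, a_{n-1} \in F_P$ exist.

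Set $f(x) := x^n + a_{n-1}x^{n-1} + \cdots + a_0 \in F_P[x]$ and $E_P := F_P[x]/(f)$. Because $f$ is irreducible over the larger field $F_\wp$, it is irreducible over $F_P$, so $E_P$ is a field extension of $F_P$ of degree $n$; it is separable because $f$ already has $n$ distinct roots in $F_{\wp'}$. By the Krasner neighborhood condition, $E_P \otimes_{F_P} F_\wp \cong F_\wp[x]/(f) \cong E_\wp$, while by the splitting condition, $E_P \otimes_{F_P} F_{\wp'} \cong F_{\wp'}[x]/(f) \cong F_{\wp'}^{\oplus n}$ for each other branch $\wp'$ at $P$. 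The main technical ingredient is the weak approximation input for the branches at $P$; once density of $F_P$ in $\prod_i F_{\wp_i}$ is established, Krasner's lemma and Hensel's lemma combine to finish the argument.
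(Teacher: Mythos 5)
Your argument is correct and follows essentially the same route as the proof the paper cites from \cite{CHHKPS 0cyc}: realize $E_\wp$ via a primitive element, use weak approximation for the pairwise inequivalent branch valuations on $F_P$ (whose completions are exactly the fields $F_{\wp_i}$) to find a single monic polynomial over $F_P$ that is Krasner-close to the minimal polynomial at $\wp$ and close to a totally split polynomial at every other branch, and then conclude by Krasner's and Hensel's lemmas. The only cosmetic point is that your separability sentence is vacuous when $\wp$ is the unique branch at $P$, but separability of $f$ already follows from the Krasner isomorphism $F_\wp[x]/(f)\cong E_\wp$.
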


\begin{prop} \label{component to global}
Let $F$ be a semi-global field with normal model $\mc X$ and closed fiber $X$. Let $U$ be a nonempty connected affine open subset of $X$, and let $E_U$ be a finite separable field extension of $F_U$.  Then there is a finite separable field extension $E$ of $F$ such that $E \otimes_F F_U \cong E_U$ as extensions of~$F_U$.
\end{prop}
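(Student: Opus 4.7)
The plan is a $t$-adic approximation combined with a Hensel/Krasner-type lifting.  By the primitive element theorem, write $E_U = F_U(\alpha)$ and let $f \in F_U[x]$ be the monic separable minimal polynomial of $\alpha$.  After replacing $\alpha$ by $d\alpha$ for a suitable common denominator $d \in \wh R_U$, we may assume (after the corresponding rescaling $f(x) \mapsto d^n f(x/d)$) that the minimal polynomial lies in $\wh R_U[x]$ and is still monic of degree $n = [E_U:F_U]$; its discriminant $\Delta \in \wh R_U$ is nonzero because $E_U/F_U$ is separable.

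Since $\wh R_U$ is the $t$-adic completion of $R_U$, one can approximate $f$ coefficient-wise by a monic polynomial $g \in R_U[x]$ with $g \equiv f \pmod{t^N \wh R_U[x]}$ for any preassigned $N$.  Because $\wh R_U$ is $t$-adically separated, $\Delta$ has bounded $t$-order, so for $N$ large the discriminant $\disc(g) \equiv \Delta \pmod{t^N}$ is nonzero; in particular $g$ is separable over $F$.

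The heart of the argument is to show that $F_U[x]/(g) \cong F_U[x]/(f) = E_U$ once $N$ is sufficiently large.  Consider the finite free $\wh R_U$-algebra $\wh S := \wh R_U[x]/(f)$, which inherits $t$-adic completeness from $\wh R_U$.  Letting $\bar x \in \wh S$ denote the class of $x$, one has $g(\bar x) \in t^N \wh S$, while $g'(\bar x)$ differs from $f'(\bar x)$ by an element of $t^N \wh S$, and $f'(\bar x)$ is a unit in $\wh S[1/\Delta]$ (its norm down to $\wh R_U$ being $\pm\Delta$).  A Hensel-type lifting in a suitable $t$-adic completion of $\wh S[1/\Delta]$ yields, for $N$ large, a root $y$ of $g$ in $E_U = \wh S \otimes_{\wh R_U} F_U$ very close to $\bar x$; this produces a nonzero, hence (by dimension count) bijective, $F_U$-algebra map $F_U[x]/(g) \to E_U$.

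Finally, set $E := F[x]/(g)$.  By construction $E \otimes_F F_U \cong F_U[x]/(g) \cong E_U$.  Flatness of $F \hookrightarrow F_U$ gives an embedding $E \hookrightarrow E_U$, so $E$ is an integral domain and, being finite over the field $F$, is itself a field; separability of $E/F$ follows from $\disc(g) \neq 0$.  The main obstacle, I expect, is the Hensel step: because $\wh R_U$ is not a DVR and $\wh S[1/\Delta]$ is not itself $t$-adically complete, one must carefully control the interaction between inverting $\Delta$ and passing to the $t$-adic completion in order to ensure that the produced root of $g$ actually lies in $E_U$ rather than in some strictly larger ring.
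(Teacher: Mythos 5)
The paper does not actually prove Proposition~\ref{component to global}; it quotes it from \cite{CHHKPS 0cyc} (Proposition~2.4 there), so your argument has to stand on its own. It does not: the step you yourself flag as ``the main obstacle'' is not a technical wrinkle but the point where the proof breaks down. The underlying problem is that $t$-adic closeness of monic polynomials over $\wh R_U$ does \emph{not} imply isomorphism of the corresponding $F_U$-algebras, for any $N$. The reason is that $\Delta$ (equivalently $f'(\bar x)$) may vanish along a \emph{horizontal} divisor of $\Spec(\wh R_U)$, i.e.\ a height one prime not containing $t$, and the $t$-adic topology gives no control there: no power of $t$ is divisible by $\Delta$, so $g(\bar x)\in t^N\wh S$ is never small compared with $g'(\bar x)^2$. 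Concretely, take $\mc X=\mbb P^1_{k[[t]]}$ with $\cha(k)\ne 2$ and $U=\mbb A^1_k$, so that $\wh R_U=k[x][[t]]$. The polynomials $y^2-(x+t)$ and $y^2-(x+t+t^N)$ are congruent modulo $t^N$, yet define non-isomorphic quadratic field extensions of $F_U$ for every $N\ge 2$: the ideal $(x+t)$ is a height one prime of the regular ring $k[x][[t]]$ (the quotient is $k[[t]]$ via $x\mapsto -t$), the associated discrete valuation of $F_U$ takes the value $1$ on the product $(x+t)(x+t+t^N)$ because $x+t\nmid t^N$, and an element of odd valuation is not a square. (Here $x+t\in R_U$, so this particular extension descends trivially; the example refutes only your congruence criterion.) Your Hensel iteration does produce a root of $g$, but only in the $t$-adic completion of $\wh S[1/\Delta]$, which strictly contains $E_U$ --- in the example that completion contains a square root of $1+t^N/(x+t)$ while $E_U$ does not --- and there is no way to force the root back into $E_U$, because in general it is not there.

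So the missing idea is precisely control over the part of the ramification locus of $E_U/F_U$ that is transverse to the closed fiber; any correct proof must engage with those horizontal divisors (which are finite over $T$, hence accessible from $F$) rather than working purely $t$-adically. This is also why the paper's own proof of the companion statement, Proposition~\ref{branch to component}, splits the extension into an unramified part (lifted via \cite{SGA1}) and a totally ramified part handled by an explicit uniformizer computation, instead of approximating a minimal polynomial. Your framing of the outer layers (scaling to get a monic integral polynomial, recovering that $E=F[x]/(g)$ is a separable field once $E\otimes_F F_U$ is a field) is fine, but the core approximation claim is false as stated.
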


These results raise the question of whether there are analogs with the roles of $P$ and $U$ interchanged; these would in particular treat the other two edges of the above diamond in that example.  Such analogs of Propositions~\ref{branch to point} and~\ref{component to global} do not hold in general; see Section~\ref{char p subsec} below for counterexamples.
But analogs do hold if $\cha(k)=0$, where as above $k$ is the residue field of the complete discretely valued field $K$:

\begin{prop} \label{branch to component}
Let $F$ be a semi-global field over a complete discrete valuation ring with residue field~$k$, and assume that $\cha(k)=0$. Let $\mc X$ be a normal model for $F$ with closed fiber $X$.
Let $U \subset X$ be a nonempty connected affine open subset, and 
let $\wp$ be a branch of $X$ lying on $U$.  Then
for every finite separable field extension $E_\wp$ of $F_\wp$ there is a finite separable field extension 
$E_U$ of $F_U$ such that $E_U \otimes_{F_U} F_\wp \cong E_\wp$ as $F_\wp$-algebras.
In fact there is a finite separable field extension 
$E$ of $F$ such that $E \otimes_F F_\wp \cong E_\wp$ as $F_\wp$-algebras.
\end{prop}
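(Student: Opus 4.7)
The plan is to first construct a finite separable field extension $E_U/F_U$ with $E_U \otimes_{F_U} F_\wp \cong E_\wp$, and then to apply Proposition~\ref{component to global} to obtain $E/F$ with $E \otimes_F F_U \cong E_U$; transitivity of base change then gives $E \otimes_F F_\wp \cong E_\wp$. The heart of the argument is thus the descent from $F_\wp$ to $F_U$.

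To construct $E_U$, I would exploit the structure theory of finite separable extensions of the complete discretely valued field $F_\wp$ in residue characteristic zero: every such $E_\wp/F_\wp$ decomposes as a tower $F_\wp \subset E_\wp^{\mathrm{unr}} \subset E_\wp$ with $E_\wp^{\mathrm{unr}}/F_\wp$ unramified (classified by a finite separable residue-field extension $\ell/k(\wp)$) and $E_\wp/E_\wp^{\mathrm{unr}}$ totally tamely ramified of some degree $e$. For the unramified part, the density of $k(U\red) = k(\bar U\red)$ in the complete discretely valued field $k(\wp)$ (Notation~\ref{basic notation}) together with Krasner's lemma applied in this one-dimensional setting produces a finite separable extension $\ell_0/k(U\red)$ with $\ell_0 \otimes_{k(U\red)} k(\wp) \cong \ell$. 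Lifting a monic defining polynomial for $\ell_0$ from $\mc O(U\red)[x] = (\wh R_U / t\wh R_U)[x]$ to $R_U[x] \subset \wh R_U[x]$ and invoking Hensel's lemma in the $t$-adic topology on $F_U$ yields an unramified extension $E_U^{\mathrm{unr}}/F_U$ with $E_U^{\mathrm{unr}} \otimes_{F_U} F_\wp \cong E_\wp^{\mathrm{unr}}$. For the tame part, writing $E_\wp = E_\wp^{\mathrm{unr}}(\beta)$ with $\beta^e = \pi^s u$ (where $\pi$ is a uniformizer of $E_\wp^{\mathrm{unr}}$, $\gcd(s,e)=1$, and $u$ is a unit), I would use the density of $\ell_0$ in $\ell$ (inherited from that of $k(U\red)$ in $k(\wp)$) together with Hensel's lemma — freely available because $\cha(k) = 0$ turns units close to $1$ into $n$-th powers for every $n$ — to match the Kummer class of $u$ by the class of some $u_0 \in (E_U^{\mathrm{unr}})^\times$, possibly after an auxiliary unramified enlargement to supply missing roots of unity. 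Adjoining a suitable radical of the form $(t^s u_0')^{1/(em)}$ to $E_U^{\mathrm{unr}}$ (where $m = v_\wp(t)$ is the multiplicity of the closed-fiber component at $\wp$ and $u_0'$ absorbs the unit $\pi^m/t$) then yields $E_U/F_U$ with $E_U \otimes_{F_U} F_\wp \cong E_\wp$.

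Applying Proposition~\ref{component to global} to this $E_U$ completes the construction. The main obstacle is the tame ramified step when the multiplicity $m = v_\wp(t)$ exceeds $1$: the unit discrepancy $\pi^m/t$ and the needed roots of unity both enter, so one must carefully interleave auxiliary unramified enlargements with the Kummer-style radical adjunctions to ensure the resulting field has precisely the prescribed base change to $F_\wp$. The hypothesis $\cha(k) = 0$ is essential throughout: it guarantees that all ramification over $F_\wp$ is tame and thus reachable by radicals, and that Hensel's lemma produces the required roots without obstruction from wild phenomena.
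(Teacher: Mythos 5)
Your overall strategy---split $E_\wp/F_\wp$ into an unramified part handled via the density of $k(U\red)$ in $k(\wp)$, Krasner's lemma, and Henselian lifting, then a totally (tamely) ramified part handled by adjoining a radical of a globally defined element, and finally invoke Proposition~\ref{component to global}---is the same as the paper's, and your unramified step is essentially sound. But the tame step as written has two genuine problems. First, the ``auxiliary unramified enlargement to supply missing roots of unity'' would destroy the conclusion: if you enlarge $E_U^{\mathrm{unr}}$, the base change of the resulting field to $F_\wp$ becomes a compositum strictly containing $E_\wp$, not $E_\wp$ itself. Fortunately no roots of unity are needed: a totally tamely ramified extension of a complete discretely valued field with trivial residue extension is always generated by an $e$-th root of a suitable uniformizer of the base field---no Galois or Kummer structure is required---and the only real task is to arrange that this uniformizer lies in $E_U^{\mathrm{unr}}$.

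Second, the radicand $(t^s u_0')^{1/(em)}$ with $m=v_\wp(t)$ gives an extension of the wrong degree when $m>1$: if $\gamma^{em}=t^su_0'$ and $t$ has $\wp$-valuation $m$, then $E_\wp^{\mathrm{unr}}(\gamma)$ has ramification index $e$ but degree as large as $em$ over $E_\wp^{\mathrm{unr}}$ (for instance $X^4-\pi^2\tilde u$ is irreducible when $\tilde u$ is a nonsquare unit, producing a degree-$4$ field with a residual quadratic part), so it cannot be isomorphic to $E_\wp$. The underlying misstep is treating $t$ as the only element of $F_U$ with positive $\wp$-valuation. It is not: a uniformizer $\tau$ of the local ring at the generic point $\eta'$ of the cover $U'$ (or, before the unramified extension, of $R_\eta\subset F$) lies in $F_U'$ and is simultaneously a uniformizer of $\wh R_\wp'$, because the valuation at the branch restricts to the valuation at the generic point of the component. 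The paper uses exactly such a $\tau$: it writes $\tau=\sigma^n v$, approximates the residue of the unit $v$ by an element of $k(C')\subset R_{\eta'}/\eta'R_{\eta'}$ using density, and extracts $n$-th roots of the two leftover principal units by Hensel's lemma (available since the residue characteristic is zero), arriving at $E_\wp\cong F_\wp'[Y]/(Y^n-\tau\pi^{-r}w^{-1})$ with $\tau\pi^{-r}w^{-1}\in F_U'$. With that substitution your argument closes; without it, the tame step does not.
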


\begin{proof}
Since $F \subset F_U \subset F_\wp$, 
the first assertion follows from the second, by taking $E_U = E \otimes_F F_U$.  So it suffices to prove the second assertion.  By 
Proposition~\ref{component to global}, to prove that assertion it suffices to prove the first assertion for {\em some} choice of $U$ on which $\wp$ lies.

We first deal with the unramified part of the extension. 
Since $F_\wp$ is a complete discretely valued field, the maximal unramified extension $F_\wp'$ of 
$F_\wp$ contained in $E_\wp$ is also a complete discretely valued field.  
Moreover $E_\wp$ is totally ramified over $F_\wp'$.
Let $\wh R_\wp'$ be the integral closure of $\wh R_\wp$ in $F_\wp'$, say with maximal ideal $\wp'$.
The reduction $k(\wp') = \wh R_\wp'/\wp'$ is then a finite separable field extension of the 
complete discretely valued field $k(\wp)=\wh R_\wp/\wp\wh R_\wp$.  

Let $X_0$ be the irreducible component of $X$ on which $\wp$ lies, and let $U$ be a nonempty affine open subset of $X_0$ that does not meet any other irreducible component of $X$ and does not contain the point $P$ at which $\wp$ is a branch, and such that $U\red$ is regular.
Since $k(U\red)$ is dense in $k(\wp)$, by Krasner's Lemma there is a finite separable field extension $L$ of $k(U\red)$ that induces $k(\wp')$ over $k(\wp)$.  Thus $L = k(U')$, where $U'$ is a finite branched cover of~$U\red$.  After shrinking $U$, we may assume that $U' \to U\red$ is \'etale; i.e., $U' = \Spec(B)$ for some \'etale algebra $B$ over $k[U\red]$ whose fraction field is $L$.  
By \cite[I, Corollaire~8.4]{SGA1}, up to isomorphism there is a unique \'etale $\wh R_U$-algebra $\wh R_U'$ that lifts the \'etale $k[U\red]$-algebra $B = k[U']$.  

The base change of $\wh R_U'$ to $\wh R_\wp$ lifts the residue field extension $k(\wp')$ of $k(\wp)$.  But by \cite[I, Th\'eor\`em~6.1]{SGA1}, such a lift to a complete local ring is unique.  Hence 
the $\wh R_\wp$-algebra $\wh R_U' \otimes_{\wh R_U} \wh R_\wp$ is isomorphic to $\wh R_\wp'$, and so $\wh R_U'$ is a domain.  Thus
$F_U' := \wh R_U'  \otimes_{\wh R_U} F_U$ is a field that satisfies 
$F_U' \otimes_{F_U} F_\wp \cong F_\wp'$.  This completes the unramified step.

We now turn to the totally ramified part, and work explicitly.  With notation as above, let $\eta'$ be the generic point of $U'$, let $R_{\eta'}$ be the local ring of $\wh R_U'$ at $\eta'$, 
and let $\tau \in R_{\eta'} \subset F_U'$ be a uniformizer for the discrete valuation ring $R_{\eta'}$.  Thus $\tau$ is also a uniformizer for the complete
discrete valuation ring $\wh R_\wp'$, which contains $R_\eta$.
Let $\wh S_\wp$ be the integral closure of $\wh R_\wp'$ in $E_\wp$,
or equivalently of $\wh R_\wp$ in $E_\wp$.
Let $\til \wp$ be the maximal ideal of the complete
discrete valuation ring $\wh S_\wp$, 
and let $\sigma \in \wh S_\wp$ be a uniformizer at $\til \wp$.

Now $E_\wp$ is totally ramified over $F_\wp'$ along $\wp'$, say of degree $n=[E_\wp:F_\wp']$.
By the characteristic zero hypothesis, $\tau=\sigma^n v$ for some unit $v \in \wh S_\wp$;    
and the extension of 
residue fields $k(\wp') = \wh R_\wp'/\wp' \subseteq \wh S_\wp/{\til \wp}$
is an isomorphism.  So the image $\bar v \in \wh S_\wp/{\til \wp}$ of $v\in \wh S_\wp$
may be regarded as a non-zero element in the complete discretely valued field $k(\wp')$.

Let $C,C'$ be the regular connected projective curves containing $U\red,U'$, respectively.  Thus   
$C'$ is a branched cover of $C$; and there is a birational map $C \to X_0\red$ which is an isomorphism over $U\red$.  
Since $\wp$ is a branch of $X$ at $P$,
it is also
a branch of $C$ at a closed point $Q \in C$ that lies over $P \in \bar U$ on $X$.  Moreover $\wp'$ is a branch of $C'$ at a point $Q'$ that maps to $Q$.  Also, 
$k(\wp)$ is the fraction field of 
the complete local ring $\wh {\cal O}_{C,Q}$ of $C$ at $Q$, and 
$k(\wp')$ is the fraction field of $\wh {\cal O}_{C',Q'}$.

Let $\bar\pi \in {\cal O}_{C',Q'} \subset k(C')$ be a uniformizer of the local ring of $C'$ at $Q'$.  Thus $\bar\pi$ is also 
a uniformizer of the complete local ring $\wh {\cal O}_{C',Q'}$, which is the valuation ring of $k(\wp')$.  
So we may write $\bar v = \bar\pi^r \bar u$ for some integer $r$ and some unit 
$\bar u \in \wh{\cal O}_{C',Q'}$.
Since ${\cal O}_{C',Q'}$ is $\bar \pi$-adically dense in $\wh {\cal O}_{C',Q'}$, there is a unit
$\bar w \in {\cal O}_{C',Q'} \subset k(C')$ such that $\bar w \equiv \bar u$ modulo $\bar\pi$.  Thus
$\bar u = \bar a\bar w$ for some $\bar a \equiv 1 \!\!\mod \bar\pi$ in $\wh {\cal O}_{C',Q'}$.  

Since the residue field of the complete discrete valuation ring $\wh {\cal O}_{C',Q'}$ has 
characteristic zero, by Hensel's Lemma
there is a unit $\bar b \in \wh {\cal O}_{C',Q'} \subset k(\wp')$ 
such that $\bar b^n=\bar a$.  So $\bar v = \bar \pi^r \bar w \bar b^n$.  
Let $\pi,w \in R_{\eta'} \subset F_U'$ be lifts of $\bar\pi,\bar w \in k(C') = R_{\eta'}/\eta'R_{\eta'}$, and let
$b \in \wh R_\wp'$ be a lift of $\bar b \in k(\wp') = \wh R_\wp'/\wp'$.
Thus $v=\pi^r w b^n c \in \wh S_\wp$, for some $c \equiv 1 \!\!\mod \sigma$ in $\wh S_\wp$.
Again by Hensel's Lemma, there exists $d \in \wh S_\wp$
such that $c=d^n$.  Note that $\pi,w$ are units in $R_{\eta'}$ and
hence in $\wh R_\wp'$; and that $b,d$ are units  
in $\wh S_\wp$.

Thus $\tau\pi^{-r} w^{-1} \in R_{\eta'} \subset \wh R_\wp'$ is a uniformizer for $\wh R_\wp'$; 
$bd\sigma$ is a uniformizer for $\wh S_\wp$; and 
$(bd\sigma)^n=\tau\pi^{-r} w^{-1}$.  
Since $E_\wp = \Frac(\wh S_\wp)$ is of degree $n$ over $F_\wp' = \Frac(\wh R_\wp)$,
it follows that $E_\wp$ is generated over $F_\wp'$ by 
$bd\sigma$; i.e.,  
$E_\wp \cong F_\wp'[Y]/(Y^n- \tau\pi^{-r} w^{-1})$.  
Since $\tau\pi^{-r} w^{-1} \in R_{\eta'} \subset F_U'$, we may
consider the finite separable $F_U'$-algebra $E_U = F_U'[Y]/(Y^n- \tau\pi^{-r} w^{-1})$.  
We then have $E_U \otimes_{F_U'} F_\wp' \cong E_\wp$, and hence $E_U$ is a field.
Since $F_\wp' \cong F_U' \otimes_{F_U} F_\wp$, it follows that
$E_U \otimes_{F_U} F_\wp \cong E_\wp$, as desired.  
\end{proof}

The key ingredient in the proof of the next theorem, in addition to the propositions above, is patching over fields, as in \cite{HH:FP} and \cite{HHK}.  Recall that a scheme $V$ is {\em unibranched} at a point $P$ if the complete local ring $\wh{\mc O}_{V,P}$ has a unique minimal prime; or equivalently, $\Spec(\wh{\mc O}_{V,P})$ is irreducible.

\begin{thm} \label{point to global}
Let $F$ be a semi-global field over a complete discrete valuation ring with residue field $k$, and  assume that $\cha(k)=0$.  Let $\mc X$ be a normal model of $F$ with closed fiber~$X$. Let $P$ be a closed point of $X$, and assume that 
every irreducible component of $X$ that contains $P$ is unibranched at $P$.
If $E_P$ is a finite separable field extension of $F_P$, 
then there is a finite separable field extension $E$ of $F$ such that $E \otimes_F F_P \cong E_P$ as extensions of~$F_P$.
\end{thm}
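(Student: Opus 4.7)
My plan is to use patching over fields from \cite{HH:FP} and \cite{HHK}: I will construct a compatible patching datum whose $F_P$-component is the given $E_P$, and then invoke the patching equivalence of categories to obtain an \'etale $F$-algebra $E$ with $E \otimes_F F_P \cong E_P$. Since $E_P$ is a field and $E$ embeds into $E \otimes_F F_P$, $E$ will automatically be a field as required.

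First I will replace $\mc X$ by a birational normal model, obtained via resolution of singularities (available since $\cha(k)=0$) and repeated blow-ups centered at closed points of $X$ different from $P$. Such modifications are isomorphisms near $P$, so they preserve $F_P$ and the unibranched hypothesis at $P$. I will arrange that every irreducible component of $X$ is regular away from $P$, that distinct components of $X$ meet transversely (so $X$ is a strict normal crossings divisor away from $P$), and --- crucially --- that any two distinct irreducible components of $X$ containing $P$ meet only at $P$. Then I take $\mc P$ to be the set consisting of $P$ together with all nodes of $X$, so each connected component $U \in \mc U$ of $X \setminus \mc P$ is a nonempty affine open subset of a single irreducible component of $X$. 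Let $\mc U_0 \subseteq \mc U$ denote those $U$ whose closure contains $P$. By the unibranched hypothesis, each $U \in \mc U_0$ has a unique branch $\wp_U$ of $X$ at $P$ lying on it.

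Write $n = [E_P : F_P]$. For each $U \in \mc U_0$ I decompose $E_P \otimes_{F_P} F_{\wp_U}$ as a product $\prod_j E_{\wp_U, j}$ of finite separable field extensions of $F_{\wp_U}$, apply Proposition~\ref{branch to component} to each factor to obtain $E_{U, j}$ over $F_U$ with $E_{U, j} \otimes_{F_U} F_{\wp_U} \cong E_{\wp_U, j}$, and set $E_U := \prod_j E_{U, j}$. For each $P' \in \mc P \setminus \{P\}$ lying on the closure of some $U \in \mc U_0$, the arrangement of the model ensures that both $U$ and the branch $\wp'$ of $X$ at $P'$ on $U$ are uniquely determined; I decompose $E_U \otimes_{F_U} F_{\wp'}$ into finite separable field extensions and apply Proposition~\ref{branch to point} to each factor to obtain a finite separable field extension of $F_{P'}$ matching that factor at $\wp'$ and inducing the trivial algebra at every other branch at $P'$, and I take $E_{P'}$ to be the product of these, of degree $n$. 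For all remaining $P' \in \mc P \setminus \{P\}$ I set $E_{P'} := F_{P'}^{\oplus n}$, and for each $U' \in \mc U \setminus \mc U_0$ I set $E_{U'} := F_{U'}^{\oplus n}$. Compatibility at every branch then holds --- by construction for the branches at $P$ and at the $P'$'s adjacent to $\mc U_0$, and trivially for all remaining branches, where both sides are the trivial \'etale algebra of degree $n$. Applying the patching theorem of \cite{HH:FP}, \cite{HHK} then yields the desired $E$.

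The hardest part will be arranging the nice model in the first step: I must separate distinct components through $P$ outside $P$ by blowing up at their non-$P$ intersection points, without disturbing the local geometry at $P$. Each such blow-up creates an exceptional $\mathbb{P}^1$ meeting each of the two original strict transforms at a new non-$P$ node, but these exceptional components do not pass through $P$, so they contribute to $\mc U \setminus \mc U_0$ rather than $\mc U_0$, keeping the propagation step well-defined. A related subtlety is that Proposition~\ref{branch to point}, applied factor by factor to an \'etale algebra of total degree $n$, produces an $F_{P'}$-algebra of exactly degree $n$ rather than a larger algebra; this degree-matching relies on the uniqueness of the branch at each $P' \in \mc P \setminus \{P\}$ lying on $\mc U_0$, which is guaranteed by the arrangement of the model in the first step.
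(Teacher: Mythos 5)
Your proposal is correct and follows essentially the same route as the paper's proof: after adjusting the model by blow-ups away from $P$ so that each point $P'\ne P$ adjacent to the components through $P$ lies on a unique such $U$ with $\bar U$ unibranched there, one propagates $E_P$ outward via Proposition~\ref{branch to component} to the $E_U$ for $U$ adjacent to $P$, then via Proposition~\ref{branch to point} to the adjacent points $P'$ (trivial at their other branches), fills in trivial \'etale algebras elsewhere, and patches using \cite[Theorem~7.1]{HH:FP}. The only cosmetic difference is that the paper arranges the model by blowing up at the finitely many offending points rather than invoking resolution of singularities.
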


\begin{proof}
Choose a finite set $\mc P$ of closed points of $X$ that contains $P$ and also all the points where distinct irreducible components of $X$ meet; and let $\mc U$ be the set of connected components of the complement of $\mc P$ in $X$.  Thus each $U \in \mc U$ is affine and meets just one irreducible component of $X$.
Let $\mc U_P$ be the subset of $\mc U$ consisting of those $U \in \mc U$ whose closures contain $P$;
and let $\mc P_P$ be the subset of $\mc P$ consisting of points other than $P$ that lie in the closure of some element of $\mc U_P$.  
Blowing up $\mc X$ at the points of $\mc P_P$ produces a new model, but does not change $F$ or $F_P$.
After doing so (possibly several times), we may assume that the following two conditions hold:
\begin{enumerate}
\renewcommand{\theenumi}{\roman{enumi}}
\renewcommand{\labelenumi}{(\roman{enumi})}
\item \label{prop cond i} If $P' \in \mc P_P$ lies on the closure of $U \in \mc U_P$, then $\bar U$ is unibranched at $P'$.
\item  \label{prop cond ii} For each $P' \in \mc P_P$, there is a unique $U \in \mc U_P$ whose closure contains $P'$.
\end{enumerate}

For each branch $\wp$ at $P$, $E_\wp := E_P \otimes_{F_P} F_\wp$
is a finite direct product of finite separable field extensions $E_{\wp, i}$ of $F_\wp$.
By Proposition~\ref{branch to component}, for each branch $\wp$ at $P$ and each $i$, there is
a finite separable field extension $E_{U,i}$ of $F_U$ such that 
$E_{U,i} \otimes_{F_U} F_\wp \cong E_{\wp,i}$, where $\wp$ lies on $U \in \mc U$.  For each $U \in \mc U$
whose closure contains $P$, let $E_U$ be the direct product of the fields $E_{U,i}$, ranging over $i$. 
This is well defined, for each $U \in \mc U_P$, because of the assumption on being unibranched at $P$.  For each branch $\wp$ along any $U \in \mc U_P$, let $E_\wp = E_U \otimes_{F_U} F_\wp$.  (For the branches at $P$, this agrees with the previous definition of $E_\wp$.)

By conditions (\ref{prop cond i}) and (\ref{prop cond ii}), for each $P' \in \mc P_P$ there is a unique $U \in \mc U_P$ whose closure contains $P'$; and there is a unique branch $\wp $ at $P'$ along $U$.  For such a triple $P',U,\wp$, 
applying Proposition~\ref{branch to point} to each direct factor of $E_\wp$ provides a finite \'etale 
$F_{P'}$-algebra $E_{P'}$ such that $E_{P'} \otimes_{F_{P'}} F_\wp \cong E_\wp$ and such that 
$E_{P'} \otimes_{F_{P'}} F_{\wp'}$ is the trivial   
\'etale $F_{\wp'}$-algebra of degree $n:=[E_P:F_P]$ for every branch $\wp'$ at $P'$ other than $\wp$.  For every 
$U \in \mc U$ that is not in $\mc U_P$, let $E_U$ be the trivial \'etale $F_U$-algebra of degree $n$.  
Similarly, for every $P' \in \mc P$ that does not lie in $\mc P_P \cup \{P\}$, let $E_{P'}$ be the trivial 
\'etale $F_{P'}$-algebra of degree $n$; and for every branch $\wp B$ at a point of $\mc P$ 
that is not in $\mc P_P \cup \{P\}$, let $E_\wp$ be the trivial \'etale
$F_\wp$-algebra of degree $n$.  Then for every branch $\wp $ at a point $P' \in \mc P$ lying on some $U \in \mc U$, we have isomorphisms $E_{P'} \otimes_{F_{P'}} F_\wp \cong E_\wp \cong E_U \otimes_{F_U} F_\wp$. We then conclude by Theorem 7.1 of \cite{HH:FP}.
\end{proof}

\begin{remark} \label{char p descent rks}
\begin{enumerate}
\renewcommand{\theenumi}{\alph{enumi}}
\renewcommand{\labelenumi}{(\alph{enumi})}  
\item \label{wild ram rk}
The hypothesis that $\cha(k)=0$ in Proposition~\ref{branch to component} was used in order to avoid wild ramification and inseparable residue field extensions; and it was used in Theorem~\ref{point to global} in order to be able to rely on Proposition~\ref{branch to component}.  Otherwise the proofs carry over to characteristic $p>0$.  For example, if $E_\wp/F_\wp$ is a Galois field extension of degree prime to $p$, then all ramification is tame, and the conclusion of Proposition~\ref{branch to component} holds.  Similarly, the conclusion of Theorem~\ref{point to global} holds for a Galois field extension $E_P/F_P$ of degree prime to $p$, provided that the condition on being unibranched at $P$ is satisfied.  But these two propositions fail in general for wildly ramified extensions, as shown in Section~\ref{char p subsec}.
\item \label{unibranch rk}
In Theorem~\ref{point to global}, the hypothesis on being unibranched is essential.  Namely, suppose instead 
that $\wp_1, \wp_2$ are distinct branches of an irreducible component $X_0$ of $X$ at $P$; these are height
one primes in the Noetherian normal domain $\wh R_P$.  By the corollary in \cite[Section~VII.3]{Bo:CA}, $\wh R_P$ is a Krull domain; so by \cite[Proposition~VII.5.9 and Theorem~VII.6.3]{Bo:CA}, 
there exists $f \in \wh R_P$ that is a uniformizer at $\wp_1$ but does not lie in $\wp_2$.  
Let $\ell$ be a prime unequal to $\cha k$; let $E_P$ be the finite separable extension of $F_P$ given by adjoining an $\ell$-th root of $f$; this is ramified over $\wp_1$ but not over $\wp_2$.  Write
$E_{\wp_i} = E_P \otimes_{F_P} F_{\wp_i}$.  Then $E_{\wp_1}/F_{\wp_1}$ is ramified over $\wp_1$ but $E_{\wp_2}/F_{\wp_2}$ is not ramified over $\wp_2$.  Let $\eta$ be the generic point of $X_0$.  A uniformizer of $R_\eta$ is also a uniformizer of $R_{\wp_i}$ for $i=1,2$.  Thus if 
$E/F$ is a degree $\ell$ field extension, then $E_i := E \otimes_F F_{\wp_i}$ is ramified over $\wp_i$ if and only if $E/F$ is ramified over $\eta$.  Thus $E/F$ cannot induce both $E_{\wp_1}/F_{\wp_1}$ and $E_{\wp_2}/F_{\wp_2}$.  But $E_P/F_P$ induces $E_{\wp_1}/F_{\wp_1}$ and $E_{\wp_2}/F_{\wp_2}$.   Then $E/F$ cannot induce $E_P/F_P$. 
\item \label{refs rk}
A result related to Theorem~\ref{point to global} was proven in \cite[Lemma~5.2]{HS}.  That assertion was stated in the equal characteristic case, and it permitted the characteristic to be non-zero.  By that result, if a finite Galois extension of $k((x,t))$ is unramified over the ideal $(t)$ of $k[[x,t]]$, then it is induced by a finite Galois extension of the function field $F =k((t))(x) \subset k((x,t))$ of the $k((t))$-line.  Moreover, by a change of variables in $k((x,t))$, 
the condition on being unramified over $(t)$ can be dropped (see also \cite[Lemma~3.8]{HHK:Weier}); but this makes it impossible to specify $F$ in advance as a subfield of $k((x,t))$, unlike in the above results that restrict to characteristic zero.
\end{enumerate} 
\end{remark}

\subsection{Application to a local-global principle for zero-cycles} \label{lgp applic sec}

As in \cite{HHK}, we also use the following notation:

\begin{notation} \label{geom notn}
Let $F$ be a semi-global field with normal model $\mc X$, and let $X$ denote the closed fiber. Let $\mc P$ be a finite set of closed points of $X$ that meets each irreducible component of $X$.  We then let $\mc U$ be the set of connected components of the complement of $\mc P$ in $X$, and we let $\mc B$ be the set of branches of $X$ at points of $\mc P$.  
\end{notation}

Recall that given a variety $V$ over a field $k$, the \textit{index} (resp.\ \textit{separable index}) of $V$ is the greatest common divisor of the degrees of the finite (resp.\ finite separable) field extensions of $k$ over which $V$ has a rational point.  This is the same as the smallest positive degree of a zero-cycle (resp.\ separable zero cycle) on $V$.

As in \cite{CHHKPS 0cyc}, given a collection $\Omega$ of overfields of $F$ and an $F$-scheme $Z$, we say that $(Z,\Omega)$ {\em satisfies a local-global principle for rational points} if the following holds: $Z(F)\neq \varnothing$ if and only if $Z(L)\neq \varnothing$ for all $L \in \Omega$. In particular, we will consider the collection of overfields $\Omega_{\mc X,\mc P}$ consisting of the overfields $F_P, F_U$ for $\mc P$ and $\mc U$ as in Notation~\ref{geom notn}.

If $Z$ is a torsor under a connected and rational linear algebraic group over $F$, then $(Z,\Omega_{\mc X,\mc P})$ satisfies a local-global principle for rational points, for any choice of $\mc P$ and $\mc U$ as above by \cite[Theorem~3.7]{HHK}; see~\cite[Corollary~3.10]{CHHKPS 0cyc} for a further discussion.

In Corollaries 3.5 and 3.6 of \cite{CHHKPS 0cyc}, we showed:

\begin{prop} \label{LGP patches}
Let $F$ be a semi-global field with normal model $\mc X$, and let $X$ be the closed fiber. Let 
$Z$ be an $F$-scheme of finite type, and choose $\mc P$ and $\mc U$ as in Notation~\ref{geom notn}. Assume that for every finite separable extension $E/F$, $(Z_E,\Omega_{\mc X_E,\mc P_E})$ satisfies a local-global principle for rational points, where $\mc X_E$ denotes the normalization of $\mc X$ in $E$ and $\mc P_E$ denotes the preimage of $\mc P$ under the normalization map. Then 
\begin{enumerate}
\renewcommand{\theenumi}{\alph{enumi}}
\renewcommand{\labelenumi}{(\alph{enumi})}  
\item
The prime numbers that divide the separable index of $Z$ are precisely those that divide the separable index of some $Z_{F_\xi}$, where $\xi$ ranges over $\mc P \cup \mc U$.  In particular, the separable index of $Z$ is equal to one if and only if the separable index of each $Z_{F_\xi}$ is equal to one.
\item
If $\cha K=0$, or if $Z$ is regular and generically smooth, then the assertion also holds with the separable index replaced by the index.
\end{enumerate}
\end{prop}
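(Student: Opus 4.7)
The plan is to follow the standard local-global pattern for index: the easy direction of (a) is pure base change, while the hard direction uses the LGP hypothesis for $Z_E$ over a carefully constructed global $E/F$ built from local witnesses by patching. Write $n := \ind^{\sep}(Z)$ and $n_\xi := \ind^{\sep}(Z_{F_\xi})$ for $\xi \in \mc P \cup \mc U$.

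For the easy direction of (a), I would take any finite separable $E/F$ of degree $d$ with $Z(E) \neq \varnothing$; the base change $E \otimes_F F_\xi$ decomposes as a product of finite separable field extensions $E_{\xi,i}/F_\xi$ each with $Z(E_{\xi,i}) \neq \varnothing$ and with $\sum_i [E_{\xi,i}:F_\xi] = d$. Hence $n_\xi$ divides each summand, so $n_\xi \mid d$, and taking the gcd over all such $d$ gives $n_\xi \mid n$. In particular every prime dividing some $n_\xi$ divides $n$.

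For the reverse direction, fix a prime $\ell$ dividing no $n_\xi$; I will show $\ell \nmid n$. By the gcd characterization, for each $\xi$ there is a finite separable $E_\xi/F_\xi$ with $\ell \nmid [E_\xi:F_\xi]$ and $Z(E_\xi) \neq \varnothing$. I would patch these into a finite \'etale $F$-algebra $A$ of degree prime to $\ell$ such that every local-field factor of every $A \otimes_F F_\xi$ carries a $Z$-rational point. Concretely, for each branch $\wp \in \mc B$ on $U$ at $P$, Proposition~\ref{branch to point} lifts $E_U \otimes_{F_U} F_\wp$ to a finite \'etale $F_P$-algebra trivial at the other branches at $P$; tensoring these lifts together with $E_P$, performing the symmetric construction over each $U$, and padding everything with trivial factors up to a common degree (for instance $\prod_\xi [E_\xi:F_\xi]$, still prime to $\ell$), yields a compatible system of local \'etale algebras in the sense of the patching theorem \cite[Theorem~7.1]{HH:FP}, which then descends to the desired $A$. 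Writing $A = \prod_j E^{(j)}$ as a product of finite separable field extensions of $F$, one has $\sum_j[E^{(j)}:F]$ prime to $\ell$, so some $[E^{(j)}:F]$ is prime to $\ell$. By construction, $Z_{E^{(j)}}$ has a rational point over every field in $\Omega_{\mc X_{E^{(j)}}, \mc P_{E^{(j)}}}$, so the LGP hypothesis applied to $Z_{E^{(j)}}$ yields $Z(E^{(j)}) \neq \varnothing$, exhibiting the desired finite separable degree-prime-to-$\ell$ witness and proving $\ell \nmid n$.

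The main obstacle is the patching bookkeeping: matching the local \'etale algebras simultaneously along every branch in $\mc B$ while keeping all degrees prime to $\ell$ requires careful use of the trivial-factor flexibility built into Proposition~\ref{branch to point}. For part (b), in equal characteristic zero every finite extension is separable, so (a) gives (b) directly; in the regular, generically smooth case, any rational point of $Z$ over a finite extension $E'/F$ is a smooth point and hence lifts to the maximal separable subextension of $E'/F$ of the same or smaller degree, again reducing (b) to (a).
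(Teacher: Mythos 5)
The first thing to note is that the paper does not actually prove this proposition: it is quoted from Corollaries 3.5 and 3.6 of \cite{CHHKPS 0cyc}, so there is no in-text proof to compare against. Your easy direction (that each $n_\xi$ divides $n$ by decomposing $E\otimes_F F_\xi$ into field factors and summing degrees) is correct. The hard direction, however, has a genuine gap at exactly the point you flag as ``bookkeeping.'' Your construction of the compatible system of local \'etale algebras puts $E_P\otimes_{F_P}(\bigotimes_\wp(\text{lift of }E_{U}\otimes_{F_U}F_\wp))$ over $F_P$, which induces $(E_P\otimes_{F_P}F_\wp)\otimes_{F_\wp}(E_U\otimes_{F_U}F_\wp)\otimes(\text{trivial})$ over each branch $\wp$; to match this from the $F_U$ side, your ``symmetric construction over each $U$'' would need to lift $E_P\otimes_{F_P}F_\wp$ from $F_\wp$ to an \'etale $F_U$-algebra, i.e., an analog of Proposition~\ref{branch to point} with the roles of $P$ and $U$ interchanged. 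No such result is available, and Section~\ref{char p subsec} of this very paper (Proposition~\ref{counterexample prop}(\ref{char p counterex})) shows it \emph{fails} whenever $\cha(k)=p>0$: there are degree-$p$ extensions of $F_\wp$ induced by no degree-$p$ extension of $F_U$. This is precisely why Corollary~\ref{prod bound} --- which runs essentially the argument you propose, descending each $E_\xi$ to $F$ and tensoring --- carries the hypotheses $\cha(k)=0$ and unibranchedness, while Proposition~\ref{LGP patches} does not; whatever mechanism \cite{CHHKPS 0cyc} uses to handle the points $P$ in arbitrary characteristic, it cannot be the symmetric patching setup you describe.

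There are two further problems. Padding with trivial factors is not harmless: if $F_\xi$ itself appears as a factor of $A_j\otimes_F F_\xi$, then applying the local-global hypothesis to $Z_{A_j}$ requires $Z(F_\xi)\neq\varnothing$, which is not known, and you have no control ensuring that the particular component $A_j$ of degree prime to $\ell$ avoids all padded factors; the padding must instead be done by something like extra copies of $E_\xi$, which reintroduces the branch-matching problem. Finally, in part (b) your reduction in the regular, generically smooth case is not an argument: a smooth $E'$-point does not ``lift to the maximal separable subextension'' of $E'/F$ (rational points do not descend to subfields). The true statement is that the index and separable index coincide for regular, generically smooth schemes, which requires a moving-lemma/deformation argument or a citation, not the one-line descent you give. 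The $\cha K=0$ case of (b) is fine.
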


Proposition~\ref{component to global} and Theorem~\ref{point to global} together
yield a strengthening of Proposition~\ref{LGP patches} in the equal characteristic zero case:

\begin{cor} \label{prod bound}
In the situation of Proposition~\ref{LGP patches}, suppose that
$\cha(k)=0$ and that 
each irreducible component $X_0$ of the closed fiber $X$ of $\mc X$ is unibranched at each point $P \in \mc P$ on $X_0$. 
Then the index of $Z$ divides the product of the indices of $Z_{F_\xi}$, for 
$\xi \in \mc P \cup \mc U$.
\end{cor}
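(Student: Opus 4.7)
The plan is to reduce the claim to a prime-by-prime estimate and show $v_p(\ind(Z)) \leq v_p(n)$ for each prime $p$, where $n := \prod_{\xi \in \mc P \cup \mc U} n_\xi$ and $n_\xi := \ind(Z_{F_\xi})$; this immediately gives $\ind(Z) \mid n$. Primes not dividing $n$ are already eliminated by Proposition~\ref{LGP patches}(b), which applies in our equal characteristic zero setting, so fix a prime $p$ dividing $n$.

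Since each $n_\xi$ is the gcd of degrees of finite separable field extensions of $F_\xi$ over which $Z$ acquires a rational point, for each $\xi$ I would choose such an extension $E_\xi/F_\xi$ with $v_p([E_\xi:F_\xi]) = v_p(n_\xi)$. Next I would descend: Theorem~\ref{point to global} (whose unibranched hypothesis and residue characteristic condition hold by assumption of the corollary) at each $P \in \mc P$, and Proposition~\ref{component to global} at each $U \in \mc U$, yield finite separable field extensions $L_\xi/F$ of degree $[E_\xi:F_\xi]$ with $L_\xi \otimes_F F_\xi \cong E_\xi$. Form the \'etale $F$-algebra $L := \bigotimes_\xi L_\xi$ (tensor over $F$) and decompose $L = \prod_i L_i$ into field factors. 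Then $v_p([L:F]) = \sum_\xi v_p([L_\xi:F]) = \sum_\xi v_p(n_\xi) = v_p(n)$.

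The central step is to show $Z(L_i) \neq \varnothing$ for every field factor $L_i$. For each $\xi$, associativity and commutativity of tensor give
\[
L \otimes_F F_\xi \;\cong\; E_\xi \otimes_{F_\xi} \Bigl(\bigotimes_{\eta \neq \xi} L_\eta \otimes_F F_\xi\Bigr),
\]
so every field factor of $L \otimes_F F_\xi$ is a finite separable extension of $E_\xi$ and therefore carries a $Z$-rational point inherited from the given $E_\xi$-point. These field factors are precisely the local overfields in $\Omega_{\mc X_{L_i}, \mc P_{L_i}}$ of the various $L_i$ at points of $\mc P_{L_i} \cup \mc U_{L_i}$ above $\xi$. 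Applying the LGP hypothesis of Proposition~\ref{LGP patches} to each separable extension $L_i/F$ then gives $Z(L_i) \neq \varnothing$, hence $\ind(Z) \mid [L_i:F]$ for every $i$.

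The bound drops out of the non-archimedean inequality: $v_p(\ind(Z)) \leq \min_i v_p([L_i:F]) \leq v_p\bigl(\sum_i [L_i:F]\bigr) = v_p([L:F]) = v_p(n)$. The main technical obstacle I would expect to address carefully is the bookkeeping in the previous paragraph: identifying the field factors of $L_i \otimes_F F_\xi$ precisely with the local overfields of the normalization $\mc X_{L_i}$ above $\xi$, so that Proposition~\ref{LGP patches}'s LGP hypothesis can be invoked on each $L_i$ individually. Note also that working prime-by-prime is essential, since the index $n_\xi$ need not itself be realized as the degree of a single field extension splitting $Z$ over $F_\xi$, but its $p$-adic valuation always is.
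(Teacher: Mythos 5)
Your proposal is correct and follows essentially the same route as the paper: descend the local splitting fields via Theorem~\ref{point to global} and Proposition~\ref{component to global}, tensor the resulting extensions over $F$, decompose the \'etale algebra into field factors, verify the local-global hypothesis over each factor via the normalization, and use $\sum_i [L_i:F]=\prod_\xi d_\xi$. The only (immaterial) difference is at the final step, where the paper phrases the divisibility via the ideal inclusion $\prod I_\xi \subseteq I$ for arbitrary choices of local splitting degrees, while you fix a prime and choose $E_\xi$ realizing $v_p(\ind Z_{F_\xi})$, then apply the non-archimedean inequality.
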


\begin{proof} First consider field extensions $E_\xi/F_\xi$ for $\xi \in \mc P \cup \mc U$, say of degree $d_\xi$, such that $Z_{F_\xi}$ has an $E_\xi$-point. 
By
Proposition~\ref{component to global} and Theorem~\ref{point to global}, there are finite field extensions $A_\xi/F$ of degree $d_\xi$ that induce $E_\xi/F_\xi$, for each $\xi$.  
Thus $Z(A_\xi \otimes_F {F_\xi}) = Z(E_\xi)$ is nonempty for each $\xi$.
Let $A = \bigotimes_F A_\xi$, where the tensor product is taken over all $\xi \in \mc P \cup \mc U$.
Then for each $\xi$, $Z(A \otimes_F F_\xi)$ is nonempty.  Now $A$ is an 
\'etale algebra over $F$, and so it is the direct product of finite field extensions $A_i/F$.  
Note that $\sum_i [A_i:F] = \dim_F(A) = \prod d_\xi$.
Also, for each $i$, $Z(A_i \otimes_F F_\xi)$ is nonempty for every $\xi$.  

Let $\mc X_i$ be the normalization of $\mc X$ in $A_i$; this is a normal model of the semi-global field $A_i$, equipped
with a finite morphism $\mc X_i \to \mc X$.  Let $\mc P_i$, $\mc U_i$ consist of the inverse images of the elements of $\mc P$, $\mc U$ under this morphism, respectively.  As in Notation~\ref{branch to component}, we then have associated fields  
$(A_i)_{P'}, (A_i)_{U'}$ for $P' \in \mc P_i$ and $U' \in \mc U_i$.  For each $P \in \mc P$, $A_i \otimes_F F_P$ is the direct product of the fields $(A_i)_{P'}$, where
$P'$ runs over the points of $\mc P_i$ that lie over $P$; and similarly for each $U \in \mc U$.  Hence for each $\xi'\in \mc P_i \cup \mc U_i$, $Z((A_i)_{\xi'})$ is nonempty. By the local-global assumption, this implies $Z(A_i)$ is nonempty, for each $i$.  

Let $I$ (resp.~$I_\xi$) be the ideal in $\mbb Z$ generated by the degrees of closed points on $Z$ (resp.\ on $Z_{F_\xi}$); or equivalently, generated by the index of $Z$ (resp.\ of $Z_{F_\xi}$).  Since $\prod d_\xi =\sum_i [A_i:F]$ above, it follows that $\prod d_\xi \in I$; i.e.\ 
$\prod I_\xi \subseteq I$.  The asserted conclusion follows.
\end{proof}

Note that the above
bound is not sharp, since by enlarging the set $\mc P$ we can in general increase the product of the local indices.  It seems reasonable to ask whether, under the above hypotheses, the index of $Z$ is equal to the least common multiple of the indices of $Z_{F_\xi}$, for 
$\xi \in \mc P \cup \mc U$.  We do not know of any counterexamples.

\subsection {Failure of descent in characteristic $p$} \label{char p subsec}

Proposition~\ref{branch to component} and Theorem~\ref{point to global} fail if $\cha(k) \ne 0$, as shown in Proposition~\ref{counterexample prop} below. First we state a lemma.

\begin{lemma} \label{Gal over alg cl}
Let $D/L$ be a finite field extension, and let $\Lambda \supseteq L$ be a field in which $L$ is algebraically closed.  If 
$D \otimes_L \Lambda$ is a Galois field extension of $\Lambda$, then $D/L$ is Galois. 
\end{lemma}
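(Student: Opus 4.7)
The plan is to identify $D$ with the algebraic closure of $L$ inside $D \otimes_L \Lambda$ and then restrict the Galois group of $D \otimes_L \Lambda/\Lambda$ to produce enough $L$-automorphisms of $D$.  The crucial preliminary observation is that any $\alpha \in D \otimes_L \Lambda$ algebraic over $L$ has the same minimum polynomial over $L$ as over $\Lambda$: any monic irreducible factor of its $L$-minimum polynomial $f$ inside $\Lambda[x]$ has coefficients (elementary symmetric functions of roots of $f$) that are simultaneously algebraic over $L$ and in $\Lambda$, hence in $L$; so $f$ stays irreducible over $\Lambda$.  Two consequences follow.  First, such an $\alpha$ is separable over $L$ if and only if it is separable over $\Lambda$; since $D \otimes_L \Lambda$ is Galois (in particular separable) over $\Lambda$, every element of $D$ is separable over $L$, so $D/L$ is separable.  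Second, for any finite separable extension $E/L$ sitting inside $D \otimes_L \Lambda$, the tensor product $E \otimes_L \Lambda$ is a field of dimension $[E:L]$ over $\Lambda$.

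Next, let $A$ denote the algebraic closure of $L$ in $D \otimes_L \Lambda$; I claim that $A = D$.  For any $\alpha \in A$ the compositum $L(\alpha, D)$ is a finite separable extension of $L$, so by the second consequence above, $L(\alpha, D) \otimes_L \Lambda$ is a field of dimension $[L(\alpha, D) : L]$ over $\Lambda$.  Via the natural $\Lambda$-algebra map, it embeds into $D \otimes_L \Lambda$, whose dimension over $\Lambda$ equals $[D:L]$.  Hence $[L(\alpha, D):L] \le [D:L]$, which forces $L(\alpha, D) = D$ and so $\alpha \in D$.

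Finally, every element of $\Gal(D \otimes_L \Lambda/\Lambda)$ carries $L$-algebraic elements to $L$-algebraic elements, and so preserves $A = D$.  Restriction therefore yields a homomorphism $\Gal(D \otimes_L \Lambda/\Lambda) \to \Aut(D/L)$, which is injective because $D \otimes_L \Lambda$ is generated as a ring by $D$ and $\Lambda$ while the automorphisms fix $\Lambda$.  Comparing orders, $|\Aut(D/L)| \ge [D \otimes_L \Lambda : \Lambda] = [D:L]$, so $D/L$ is Galois.  The main obstacle is the descent step $A = D$; the hypothesis that $L$ is algebraically closed in $\Lambda$ is precisely what keeps finite separable extensions of $L$ rigid over $\Lambda$, and once that rigidity is in hand the rest of the argument is formal.
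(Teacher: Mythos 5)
Your proof is correct, but it takes a different route from the paper's. The paper first notes that $D/L$ is separable, passes to the Galois closure $\wh D$ of $D/L$ with $G=\Gal(\wh D/L)$ and $H=\Gal(\wh D/D)$, uses the hypothesis that $L$ is algebraically closed in $\Lambda$ to conclude that $\Lambda$ is linearly disjoint from $D$ and $\wh D$ over $L$, and then identifies $\Gal(\wh D\otimes_L\Lambda/\Lambda)=G$ and $\Gal(\wh D\otimes_L\Lambda/D\otimes_L\Lambda)=H$; since $D\otimes_L\Lambda/\Lambda$ is Galois, $H$ is normal in $G$, so $D/L$ is Galois. You avoid the Galois closure and the Galois correspondence entirely: you prove from scratch the rigidity statement that minimal polynomials over $L$ of algebraic elements stay irreducible over $\Lambda$ (which is the concrete content of the linear disjointness the paper invokes), use it to identify $D$ with the algebraic closure of $L$ in $D\otimes_L\Lambda$, and then restrict $\Gal(D\otimes_L\Lambda/\Lambda)$ to $D$ to get an injection into $\Aut(D/L)$, concluding by the count $|\Aut(D/L)|\ge[D:L]$. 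Your argument is more self-contained and produces the required automorphisms of $D/L$ explicitly, at the cost of some length; the paper's is shorter because it leans on the standard correspondence between normal subgroups and Galois subextensions. One cosmetic point: the coefficients of a monic irreducible factor $g$ of $f$ in $\Lambda[x]$ are symmetric functions of the roots of $g$ (which are among the roots of $f$), not of $f$ itself, but this does not affect the argument.
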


\begin{proof}
First note that if $D \otimes_L \Lambda$ is Galois, and hence separable, then $D/L$ is separable.
Let $\wh D$ be the Galois closure of $D/L$, and write
$G = \Gal(\wh D/L)$ and $H = \Gal(\wh D/D)$.  Since $L$ is algebraically closed in $\Lambda$ whereas $D$ and $\wh D$ are each separable over $L$, it follows that 
$\Lambda$ is linearly disjoint over $L$ from both $D$ and $\wh D$.
Let $\Delta = D \otimes_L \Lambda$ and $\wh \Delta = \wh D \otimes_L \Lambda$
Thus $\wh \Delta$ is a Galois field extension of $\Lambda$ with $\Gal(\wh \Delta/\Lambda) = G$, and 
$\Gal(\wh \Delta/\Delta) = H$.  Since $\Delta/\Lambda$ is Galois, it follows that $H$ is normal in $G$; hence $D/L$ is Galois.
\end{proof}

\begin{lemma} \label{AS descent la}
Let $k_2/k_1$ be a field extension in characteristic $p>0$ such that the algebraic closure of $k_1$ in $k_2$ is separable over $k_1$,
and write $F_i = k_i((t))$ for $i=1,2$.
Let $\alpha \in k_2^\times$, and let $E: = F_2[Y]/(Y^p-Y-\alpha/t)$.  Then $E$ is a
degree $p$ Galois field extension of $F_2$.  Moreover:
\renewcommand{\theenumi}{\alph{enumi}}
\renewcommand{\labelenumi}{(\alph{enumi})}  
\begin{enumerate}
\item \label{AS Galois descent} 
If $\alpha$ is not in $k_1$, then $E$ is not induced by a degree $p$ Galois field extension of $F_1$.  
\item \label{AS genl descent}
If $\alpha$ is not algebraic over $k_1$, then $E$ is not induced by any degree $p$ field extension of $F_1$.
\end{enumerate}
\end{lemma}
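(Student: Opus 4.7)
My plan has three parts: verifying that $E/F_2$ is a degree $p$ cyclic Galois extension, proving part~(a) by an explicit Artin--Schreier calculation, and deducing part~(b) from part~(a) by reducing to the case where $k_1$ is algebraically closed in $k_2$ and then applying Lemma~\ref{Gal over alg cl}.

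The first part is routine Artin--Schreier: $Y^p - Y - \alpha/t$ is irreducible over $F_2$ because no $z \in F_2$ can solve $z^p - z = \alpha/t$; indeed $v_t(z^p - z) \in p\mathbb{Z}_{<0} \cup \mathbb{Z}_{\ge 0}$ depending on whether $v_t(z) < 0$ or not, while $v_t(\alpha/t) = -1$. Hence $E/F_2$ is cyclic Galois of degree $p$.

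For part~(a), suppose $E \cong E' \otimes_{F_1} F_2$ with $E'/F_1$ Galois of degree $p$. Then $E' = F_1[Y]/(Y^p - Y - \beta)$ for some $\beta \in F_1$, and matching Artin--Schreier classes in $F_2/\wp(F_2)$ shows $\beta \equiv m\alpha/t \pmod{\wp(F_2)}$ for some $m \in \mathbb{F}_p^\times$. After replacing $\beta$ by $m^{-1}\beta$ (the corresponding $E'$ is unchanged, since substituting $Y \mapsto mY$ is an isomorphism), I may assume $\beta - \alpha/t = z^p - z$ for some $z = \sum c_n t^n \in k_2((t))$. Writing $\beta = \sum b_n t^n$ with $b_n \in k_1$ and noting that the coefficient of $t^m$ in $z^p$ is $c_{m/p}^p$ when $p \mid m$ and $0$ otherwise, the $t^{-1}$ coefficient gives $c_{-1} = \alpha - b_{-1}$, and iterating the relation $c_{-p^k} = c_{-p^{k-1}}^p - b_{-p^k}$ (using the Frobenius identity $(u-v)^p = u^p - v^p$) yields
\[
c_{-p^k} \;=\; \alpha^{p^k} \;-\; \sum_{j=0}^{k} b_{-p^j}^{\,p^{k-j}}.
\]
Since $z$ has only finitely many nonzero negative coefficients, $c_{-p^k} = 0$ for large $k$, whence $\alpha^{p^k} \in k_1$. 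Thus $\alpha$ is purely inseparable algebraic over $k_1$, so the hypothesis that the algebraic closure of $k_1$ in $k_2$ is separable over $k_1$ forces $\alpha \in k_1$, a contradiction.

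For part~(b), suppose $E \cong E' \otimes_{F_1} F_2$ with $E'/F_1$ of degree $p$. Since $p$ is prime, $E'/F_1$ is either separable or purely inseparable, and in the latter case $E' \otimes_{F_1} F_2$ is either non-reduced or purely inseparable over $F_2$, incompatible with the separable field $E$; so $E'/F_1$ is separable. Let $\bar k_1$ be the algebraic closure of $k_1$ in $k_2$, and set $\bar F_1 = \bar k_1((t))$. I would check that $\bar F_1$ is algebraically closed in $F_2$: any finite extension $M$ of $\bar F_1$ lying inside $F_2$ must have ramification index $1$ under the $t$-adic valuation (since $t$ remains a uniformizer) and residue field equal to $\bar k_1$ (being algebraic over $\bar k_1$ and embedded in $k_2$); the maximal separable subextension of $M/\bar F_1$ therefore equals $\bar F_1$ by the $ef$-formula for separable extensions of complete discretely valued fields, and any purely inseparable remainder is ruled out by expanding Laurent coefficients as $p^n$-th roots of elements of $\bar k_1$, which must themselves lie in $\bar k_1$. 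Consequently $E'' := E' \otimes_{F_1} \bar F_1$ is a field (otherwise $E'' \otimes_{\bar F_1} F_2 \cong E$ would not be), and Lemma~\ref{Gal over alg cl} applies with $L = \bar F_1$, $\Lambda = F_2$, $D = E''$ to show that $E''/\bar F_1$ is Galois of degree $p$. Running the calculation of part~(a) with $\bar k_1$ in place of $k_1$ now gives $\alpha^{p^k} \in \bar k_1$ for some $k$, so $\alpha$ is algebraic over $k_1$, contradicting the hypothesis of part~(b). The principal technical obstacle is the verification that $\bar F_1$ is algebraically closed in $F_2$, which I expect to be the most delicate step.
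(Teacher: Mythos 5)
Your proof is correct and takes essentially the same route as the paper's: part (a) via the same Artin--Schreier coefficient computation showing $\alpha^{p^k}\in k_1$ (hence $\alpha$ purely inseparable over $k_1$, hence in $k_1$), and part (b) by passing to $\bar k_1((t))$, checking it is algebraically closed in $F_2$, and invoking Lemma~\ref{Gal over alg cl} to reduce to part (a). Your treatment is slightly more careful in spots the paper compresses (the $\mathbb{F}_p^\times$-multiplier when matching Artin--Schreier classes, and the $ef$-argument for algebraic closedness of $\bar k_1((t))$ in $F_2$), but the substance is the same.
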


\begin{proof}
Since $F_2$ is of characteristic $p$, and since $\alpha/t$ is not of the form $c^p-c$ for any $c \in F_2$, it 
follows that $E$ is a degree $p$ Galois field extension of $F_2$.

For part (\ref{AS Galois descent}), assume $\alpha \not \in k_1$, and suppose that $E$ is induced by a degree $p$ Galois field extension of $F_1$.  We may then write that extension of $F_1$ as $F_1[W]/(W^p-W-\beta)$ for some $\beta \in F_1$, with $\alpha t^{-1} - \beta = \gamma^p-\gamma$ for some $\gamma \in F_2$.  
Projecting both sides of this equality onto
the $k_2$-vector subspace of $F_2$ spanned by $t^{-1},t^{-p}, t^{-p^2},\dots$, we may assume that $\beta$ and $\gamma$ lie in that subspace.  Write 
$\beta = \sum_{i=0}^n a_it^{-p^i}$ with $a_i \in k_1$.  
Then $\alpha^{p^n}t^{-p^n}  - (\sum_{i=0}^n a_i^{p^{n-i}})t^{-p^n}=\delta^p-\delta$ for some $\delta \in F_2$; so $\alpha^{p^n}= \sum_{i=0}^n a_i^{p^{n-i}} \in k_1$.  Thus $\alpha \in k_2$ is purely inseparable over $k_1$, and hence lies in $k_1$; a contradiction.

For part (\ref{AS genl descent}), assume that $\alpha$ is not algebraic over $k_1$.  Let $k_1'$ be the algebraic closure of $k_1$ in $k_2$; this is a separable extension of $k_1$ not containing $\alpha$.  Let $F_1' = k_1'((t))$.
The algebraic closure of $F_1'$ in $F_2$ is an unramified extension of complete discretely valued fields with trivial residue field extension, and so this extension of $F_1'$ is trivial; i.e., $F_1'$ is the algebraic closure of $F_1$ in $F_2$.  If $E_1/F_1$ is a degree $p$ field extension such that
$E_1 \otimes_{F_1} F_2 = E$, then Lemma~\ref{Gal over alg cl} asserts that $E_1 \otimes_{F_1} F_1'$ is a Galois field extension of $F_1'$.  
But this field extension induces $E/F_2$.  By applying part (\ref{AS Galois descent}) to the extension $k_2/k_1'$, we obtain a contradiction.
\end{proof}

\begin{example}
The transcendentality condition in part~(\ref{AS genl descent}) of Lemma~\ref{AS descent la} is necessary.  For example, 
let $\kappa$ be a field of characteristic three; let $k_1=\kappa(x)$; and let $k_2=\kappa(u)$ where $u^2=x$.  Let $\alpha = u$, which lies in $k_2$ and is algebraic over $k_1$ but does not lie in $k_1$.  
Write $F_i = k_i((t))$ for $i=1,2$, and let $E/F_2$ be the $3$-cyclic Galois extension given by $E=F_2[Y]/(Y^3-Y-u/t)$.  Then $E$ is not induced by a $3$-cyclic Galois extension of $F_1$, but it is induced by the degree three non-Galois extension $E_0/F_1$ given by $E_0=F_1[W]/(W^3+W^2+W-x/t^2)$.  
Here $E/F_1$ is an $S_3$-Galois field extension, and $E_0$ is the fixed field of the order two subgroup of $S_3$ generated by the involution taking $u$ to $-u$ and 
taking $Y$ to $-Y$.  As an extension of $F_1$, the field $E_0$ is generated by the element $W=Y^2$.
\end{example}

We will apply Lemma~\ref{AS descent la} in the situation in which $k_1$ is the fraction field of a 
Dedekind domain $D$, and $k_2$ is the fraction field of the completion of $D$ at a maximal ideal.  By Artin's Approximation Theorem (Theorem~1.10 of \cite{artin}), the separability hypothesis in Lemma~\ref{AS descent la} will hold if $D$ is excellent; e.g., if it is the coordinate ring of a curve over a field of characteristic $p$.
In that same situation, we next prove 
a mixed characteristic analog of Lemma~\ref{AS descent la}.

\begin{lemma} \label{Kummer descent la}
Let $k_1$ be the fraction field of a characteristic $p$ excellent Dedekind domain $D$; let $k_2$ be the fraction field of the completion $\wh D$ of $D$ at a maximal ideal $\frak m$, and 
let $D'$ be the algebraic closure of $D$ in $\wh D$.  
Let $R_1 \subseteq R_2$ be an extension of complete discrete valuation rings of mixed characteristic $(0,p)$ having residue field extension $k_2/k_1$, and let $F_i$ be the fraction field of $R_i$.
Let $x \in R_1$ be an element whose reduction $\bar x \in k_1$ is a uniformizer for $D$ at $\frak m$, and let $g \in R_2$ be an element whose reduction $\bar g \in k_2$ lies in 
$\wh D\smallsetminus D'$.  
Then $E = F_2[Y]/(Y^p-g^p-x)$ is a degree $p$ field extension of $F_2$ that is not induced by any degree $p$ field extension of $F_1$, nor by any degree $p$ Galois extension of the algebraic closure $F_1'$ of $F_1$ in $F_2$. 
\end{lemma}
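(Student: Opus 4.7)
My plan has three main parts, paralleling the structure of Lemma~\ref{AS descent la}.

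First, I verify that $E$ is a degree $p$ field extension of $F_2$: the polynomial $Y^p - (g^p+x)$ is irreducible because $g^p + x \notin F_2^{\times p}$. If $g^p + x = h^p$ for some $h \in F_2$, comparing $R_2$-valuations forces $h \in R_2^\times$, and reduction modulo the maximal ideal gives $\bar h^p = \bar g^p + \bar x$ in $k_2$. Since $\cha k_2 = p$, Frobenius linearity rewrites this as $(\bar h - \bar g)^p = \bar x$, making $\bar x$ a $p$-th power in $k_2 = \Frac(\wh D)$. But $\bar x$ is a uniformizer of the DVR $\wh D$, so $v_{\wh D}(\bar x) = 1$ is not divisible by $p$; contradiction.

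The main work is to show $E$ is not induced by any degree $p$ Galois extension of $F_1'$. Assume such an $E_1'/F_1'$ exists. After adjoining $\zeta_p$ via a prime-to-$p$ extension if necessary, we reduce to the Kummer case $E_1' = F_1'(\sqrt[p]{b})$ for some $b \in (F_1')^\times$; modifying $b$ by $p$-th powers, we may take $b$ a unit in the valuation ring of $F_1'$, so the isomorphism $E_1' \otimes F_2 \cong E$ translates to $g^p + x = bc^p$ for some $c \in R_2^\times$. By Cohen's structure theorem, the equicharacteristic complete DVR $\wh D$ has the form $\kappa_0[[\bar x]]$ with $\kappa_0 = \wh D/\bar x\wh D$, so $k_2 = \kappa_0((\bar x))$; reducing the equation modulo the maximal ideal of $R_2$ yields
\[
\bar g^p + \bar x = \bar b \bar c^p \quad \text{in} \quad k_2 = \kappa_0((\bar x)).
\]

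To extract a contradiction, I decompose $k_2$ as a $p$-dimensional $\kappa_0((\bar x^p))$-vector space with basis $\{1, \bar x, \ldots, \bar x^{p-1}\}$. Writing $\bar b = \sum_{m=0}^{p-1} \bar x^m B_m(\bar x^p)$ and noting that both $\bar g^p$ and $\bar c^p$ lie in $\kappa_0((\bar x^p))$ (by Frobenius), matching coefficients forces $B_m = 0$ for $m \geq 2$, $B_1(\bar x^p)\bar c^p = 1$, and $B_0(\bar x^p)\bar c^p = \bar g^p$. Hence $\bar b = B_0(\bar x^p) + \bar x B_1(\bar x^p)$ with $B_0 = \bar g^p B_1$. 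Now apply the derivation $\partial = d/d\bar x$, which annihilates $\kappa_0((\bar x^p))$: we obtain $\partial \bar b = B_1(\bar x^p)$. By excellence of $D$, the algebraic closure $k_1'/k_1$ is separable, so $\partial$ extends uniquely to a derivation on $k_1'$; since $\bar b \in k_1'$, we get $\partial \bar b = B_1(\bar x^p) \in k_1'$, whence $B_0(\bar x^p) = \bar b - \bar x B_1(\bar x^p) \in k_1'$ and $\bar g^p = B_0(\bar x^p)/B_1(\bar x^p) \in k_1'$. Separability of $k_2/k_1'$ (inherited from $k_2/k_1$) then gives $\bar g \in k_1'$, so $\bar g \in k_1' \cap \wh D = D'$, contradicting the hypothesis $\bar g \in \wh D \setminus D'$.

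Finally, I deduce the non-Galois case. If $E_1/F_1$ is a degree $p$ field extension inducing $E$, then $E_1' := E_1 \otimes_{F_1} F_1'$ is a degree $p$ field extension of $F_1'$ (since $F_1'$ is algebraically closed in $F_2$ and $E$ is a field). When $\zeta_p \in F_2$, so that $E/F_2$ is Galois, Lemma~\ref{Gal over alg cl} applied with $L = F_1'$ and $\Lambda = F_2$ forces $E_1'/F_1'$ to be Galois, contradicting the previous step. When $\zeta_p \notin F_2$, I base change along the prime-to-$p$ extension $F_1(\zeta_p)/F_1$ and apply the same argument, using that $F_1'(\zeta_p)$ is the algebraic closure of $F_1(\zeta_p)$ in $F_2(\zeta_p)$. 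The main obstacle is the coefficient-decomposition and derivation argument in the second step, which plays the role of the Artin-Schreier normal form used in Lemma~\ref{AS descent la} and requires careful use of excellence to control algebraicity of derivatives.
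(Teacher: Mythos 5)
Your route to the key claim is genuinely different from the paper's (which never chooses a Cohen presentation: it rewrites the putative descent as the polynomial identity $(\bar g e)^p+\bar x e^p=h$ over $D'$, invokes Artin approximation to produce a second solution in the henselization $D'$, and extracts $\bar g\in D'$ by an elementary manipulation). Your version has a gap at its crux, namely the assertion that $\partial\bar b=B_1\in k_1'$. What excellence gives you is that $k_1'/k_1$ is separable algebraic, and hence that a derivation \emph{of $k_1$ into $k_1$} extends uniquely to a derivation of $k_1'$ into $k_1'$. But $\partial=d/d\bar x$ is a derivation of $k_2$, and you are \emph{restricting} it to $k_1'$; for the restriction to take values in $k_1'$ you first need $\partial(k_1)\subseteq k_1'$, which is not automatic and is never verified. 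The point is that $\partial$ depends on the choice of coefficient field $\kappa_0\subset\wh D$ in the Cohen presentation, and coefficient fields are highly non-unique when $k_2$ is imperfect. For instance, with $D=\mathbb F_p(u)[s]_{(s)}$ and $\bar x=s$, any lift $\til u=u+\lambda(s)$ with $\lambda\in s\,\mathbb F_p[[s]]$ generates a coefficient field $\mathbb F_p(\til u)$, and relative to that presentation one has $\partial(u)=-\lambda'(s)$, which for a suitably lacunary $\lambda$ lies in no algebraic extension of $k_1=\mathbb F_p(u,s)$ inside $k_2$. So for a general Cohen presentation the step is false, and for a particular good one it is unproved.

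The argument can be repaired by building the derivation from below instead of from above, and this also lets you discard the basis decomposition. Since $\bar x$ is a uniformizer of $D$ at $\frak m$, it is not a $p$-th power in $k_1$, so there is a derivation $\delta$ of $k_1$ with $\delta(\bar x)=1$. Excellence gives that the \emph{whole} extension $k_2/k_1$ is separable (geometric regularity of $D\to\wh D$), not merely that $k_1'/k_1$ is, so $\Omega_{k_1}\otimes_{k_1}k_2\to\Omega_{k_2}$ is a split injection and $\delta$ extends to a derivation $\partial$ of $k_2$; by uniqueness over the separable algebraic extension $k_1'/k_1$ this $\partial$ satisfies $\partial(k_1')\subseteq k_1'$. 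Applying $\partial$ directly to $\bar g^p+\bar x=\bar b\,\bar c^p$ kills the $p$-th powers and yields $1=\partial(\bar b)\,\bar c^p$, whence $\bar c^p=\partial(\bar b)^{-1}\in k_1'$, then $\bar c\in k_1'$ and $\bar g^p=\bar b\,\bar c^p-\bar x\in k_1'$, then $\bar g\in k_1'\cap\wh D=D'$, the desired contradiction; the rest of your proof (the reduction to the Kummer case and the passage from Galois to arbitrary degree $p$ extensions via Lemma~\ref{Gal over alg cl}) matches the paper. Two smaller points to patch in the Kummer reduction: the isomorphism $E_1'\otimes F_2\cong E$ only gives $g^p+x=b^jc^p$ for some $1\le j\le p-1$ (harmless, replace $b$ by $b^j$), and normalizing $b$ to a unit of the valuation ring of $F_1'$ requires $p\mid v_{F_1'}(b)$ in the normalized valuation of $F_1'$, which does not follow from $p\mid v_{F_2}(b)$ if $p$ divides the ramification index of $F_2/F_1'$; this case needs to be excluded or handled.
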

 
\begin{proof}
It suffices to show that the extension $E(\zeta_p)/F_2(\zeta_p)$ is not induced by any degree $p$ field extension of $F_1(\zeta_p)$, nor by any degree $p$ Galois extension of $F_1'(\zeta_p)$.  So 
after replacing $F_i$ by $F_i(\zeta_p)$ for $i=1,2$ (which does not change $k_i$), we may assume that $\zeta_p \in R_i \subset F_i$.
Now the residue class $\bar f \in k_2$ of $f  := g^p + x\in R_2$ 
is not a $p$-th power because the residue class of $x$ is not a $p$-th power there.  So $f$ is also not a $p$-th power in $F_2$, and $E$ is a degree $p$ Galois field extension of $F_2$ (viz.\ a Kummer extension).  

We claim that for every $e \in k_2^\times$, $\bar fe^p$ does not lie in the algebraic closure $k_1'$ of $k_1$ in $k_2$.  Since the residue field of
$F_1'$ is $k_1'$, the claim implies that $f \til e^p \not\in F_1'$
for every $\til e \in F_2^\times$.  Hence $f \in F_2^\times$ is not in the same $p$-th power class as any element of $F_1'$.  Thus $E$ 
is not induced by any degree $p$ Galois field extension of $F_1'$.
By Lemma~\ref{Gal over alg cl}, $E$ is also not induced by any degree $p$ field extension of $F_1$.

To prove the claim (and therefore the assertion), suppose otherwise; i.e., $\bar fe^p \in k_1'$ for some $e \in k_2^\times$.  After multiplying $e$ by some non-zero element of $D'$, we may assume that $\bar fe^p$ is equal to some element $h \in D'$.  Since $f = g^p + x$, the elements $\bar ge, e$ satisfy the polynomial equation $Y_1^p + \bar xY_2^p - h=0$ over $D'$.  By Artin's Approximation Theorem, 
$D'$ is the henselization of $D$ at $\frak m$, and 
there exist elements $\bar g', e' \in D'$ such that $\bar g'e', e'$ are solutions
to the above equation, with $e' \ne 0$.  
So $(\bar g e)^p +\bar x e^p = (\bar g' e')^p +\bar x e'^p$. 
If $e\ne e'$ then $\bar x=(\bar g e - \bar g' e')^p/(e '-e)^p$, which is a contradiction since $\bar x$ is not a $p$-th 
power in $D$.  So $e=e' \in D'$, and $\bar g^p = (h - \bar xe^p)/e^p \in D'$,
using that $(\bar g e)^p +\bar x e^p=h$.  Since $D'$ is algebraically closed in $\wh D$, it follows that $\bar g \in D'$, which is a contradiction.
\end{proof}

\begin{prop} \label{counterexample prop}
Let $F$ be a semi-global field over a complete discrete valuation ring with residue field $k$, and assume that $\cha k=p>0$. Let $\mc X$ be a normal model for $F$, and let $X$ denote its closed fiber.
Let $P$ be a closed point of $X$ lying on an irreducible component $X_0$, let $\wp$ be a branch of $X_0$ at $P$, and let $U$ be a nonempty connected affine open subset of $X$ that meets $X_0$ but does not contain $P$.
\renewcommand{\theenumi}{\alph{enumi}}
\renewcommand{\labelenumi}{(\alph{enumi})}  
\begin{enumerate}
\item \label{char p counterex}
There is a degree $p$ Galois field extension $E_\wp$ of $F_\wp$ 
that is not induced by any degree $p$ field extension of $F_U$, or even a degree $p$ field extension of the algebraic closure of $F_U$ in $F_\wp$.

\item \label{mixed char counterex}
There is a degree $p$ Galois field extension $E_P$ of $F_P$ 
that is not induced by any degree $p$ field extension of $F$, or even a degree $p$ field extension of the algebraic closure of $F$ in $F_P$.
\end{enumerate}
\end{prop}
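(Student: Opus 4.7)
The plan is to construct the required extensions by applying Lemma~\ref{AS descent la} in the equal characteristic case ($\cha F = p$) and Lemma~\ref{Kummer descent la} in the mixed characteristic case ($\cha F = 0$, $\cha k = p$), and then to derive part~(\ref{mixed char counterex}) from part~(\ref{char p counterex}) via Proposition~\ref{branch to point}. The key residue data is as follows: by Notation~\ref{basic notation}, $k(\wp) = \Frac(\wh{\mathcal O}_{\bar U^\red,P})$, so taking a regular affine neighborhood $V$ of $P$ on the branch of $\bar U^\red$ that corresponds to $\wp$ (possibly after shrinking $U$) and letting $D := k[V]$ produces an excellent Dedekind domain with fraction field $k_1 := k(U^\red)$ sitting inside $k_2 := k(\wp) = \Frac(\wh D_{\frak m})$, where $\frak m$ is the maximal ideal of $D$ at $P$.

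For part~(\ref{char p counterex}) in equal characteristic, Cohen's structure theorem gives $\wh R_\wp \cong k(\wp)[[\pi]]$, so $F_\wp = k_2((\pi))$; I pick $\alpha \in k_2^\times$ transcendental over $k_1$ (possible since $k_2/k_1$ has infinite transcendence degree) and set $E_\wp := F_\wp[Y]/(Y^p - Y - \alpha/\pi)$, a degree $p$ Artin--Schreier hence Galois extension of $F_\wp$. In mixed characteristic, I take $R_2 := \wh R_\wp$ and construct a complete discrete valuation subring $R_1 \subseteq R_2$ with residue field $k_1$ via a Cohen coefficient-ring construction; choosing $x \in R_1$ reducing to a uniformizer of $D$ at $\frak m$ and $g \in R_2$ reducing to an element of $\wh D \setminus D'$, I set $E_\wp := F_\wp[Y]/(Y^p - g^p - x)$, as in Lemma~\ref{Kummer descent la}. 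When $\zeta_p \notin F_\wp$ (so that the Kummer extension is not a priori Galois over $F_\wp$), I instead take $E_\wp$ to be the unramified lift to $F_\wp$ of a suitably chosen Artin--Schreier extension of $k(\wp)$, which is automatically $\mathbb Z/p$-Galois over $F_\wp$ by rigidity of finite étale algebras over the Henselian ring $\wh R_\wp$, and I replace the appeal to Lemma~\ref{Kummer descent la} by a residue-level variant of Lemma~\ref{AS descent la}'s non-descent argument.

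In all cases, the relevant lemma rules out descent of $E_\wp$ to any degree $p$ field extension of $F_1 := \Frac(R_1)$, and (via Lemma~\ref{Gal over alg cl}) to any degree $p$ Galois extension of the algebraic closure of $F_1$ in $F_\wp$. To upgrade to non-descent from the algebraic closure $M$ of $F_U$ in $F_\wp$, I use the containment $F_U \subseteq F_1$---verified by expanding elements of $\wh R_U$ as series in $\pi$ with coefficients in $k(U^\red)$, with a minor modification when $X_0$ has multiplicity greater than one in $X$---from which $M$ lies in the algebraic closure of $F_1$ in $F_\wp$. Any putative degree $p$ field extension of $M$ inducing $E_\wp$ over $F_\wp$ would base-change to a degree $p$ étale algebra over this algebraic closure whose further base change is the field $E_\wp$; irreducibility of $E_\wp$ forces this étale algebra to be a field, contradicting the lemma.

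For part~(\ref{mixed char counterex}), I apply Proposition~\ref{branch to point} to $E_\wp$ to produce a degree $p$ field extension $E_P$ of $F_P$ with $E_P \otimes_{F_P} F_\wp \cong E_\wp$ (with Galois-ness preserved via additional care in the descent). If a degree $p$ field extension $E$ of $F$ (or of the algebraic closure $M_P$ of $F$ in $F_P$) induced $E_P$, then base-changing to $F_U$ (respectively to $M_U \supseteq M_P$, the containment holding because elements algebraic over $F$ are algebraic over $F_U$) would yield a degree $p$ étale algebra over $F_U$ (resp.\ $M_U$) whose base change to $F_\wp$ is the field $E_\wp$, hence itself a degree $p$ field extension, contradicting part~(\ref{char p counterex}). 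The main obstacle I anticipate is the mixed characteristic case of part~(\ref{char p counterex}): realizing $R_1 \subseteq R_2$ with the prescribed residue-field extension, handling the Galois condition when $\zeta_p \notin F_\wp$, and justifying the containment $F_U \subseteq F_1$ carefully.
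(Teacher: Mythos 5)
Your construction follows the paper's in its essentials --- the same two lemmas (\ref{AS descent la} and \ref{Kummer descent la}), the same choice of an element of $k(\wp)$ transcendental over the function field of the component, and Artin approximation to identify the algebraic closure of that function field in $k(\wp)$ --- but one step genuinely fails. In the mixed characteristic case with $\zeta_p \notin F_\wp$, you propose to replace the Kummer-type extension by the unramified lift of an Artin--Schreier extension of $k(\wp)$. No unramified extension can serve as a counterexample: every unramified extension of $F_\wp$ with separable residue field extension is induced from $F_U$, in \emph{any} characteristic. This is precisely the ``unramified step'' in the proof of Proposition~\ref{branch to component}: since $k(U\red)$ is dense in $k(\wp)$, Krasner's Lemma descends the residue field extension to $k(U\red)$, and unique lifting of \'etale algebras \cite[I, Corollaire~8.4]{SGA1} then produces an extension of $F_U$ inducing your $E_\wp$; the characteristic zero hypothesis in that proposition is used only for the totally ramified part. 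So the counterexample must be wildly ramified, and the ``residue-level variant of the non-descent argument'' you invoke cannot exist. The paper deals with $\zeta_p$ inside Lemma~\ref{Kummer descent la} itself: it keeps $E = F_2[Y]/(Y^p - g^p - x)$ throughout, adjoins $\zeta_p$ to $F_1$ and $F_2$ (which changes neither $k_1$ nor $k_2$), proves non-descent there, and observes that descent over $F_1$ would imply descent over $F_1(\zeta_p)$.

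A second, smaller gap is in your derivation of part~(\ref{mixed char counterex}) from part~(\ref{char p counterex}): Proposition~\ref{branch to point} only returns a finite separable field extension $E_P/F_P$, and a degree $p$ separable extension need not be Galois. Lemma~\ref{Gal over alg cl} does not rescue this, because $F_P$ is not algebraically closed in $F_\wp$ (the fraction field of the henselization of $R_\wp$ lies strictly between them). The paper avoids the issue by running the construction in the opposite direction: the defining element ($\alpha/t$, resp.\ $g^p+x$) is chosen inside $F_P$ from the start, so $E_P$ is written down explicitly over $F_P$ and $E_\wp$ is its base change by definition; a single non-descent statement over the algebraic closure $F_\eta'$ of $F_\eta$ in $F_\wp$ --- which contains both the algebraic closure of $F_U$ in $F_\wp$ and the algebraic closure of $F$ in $F_P$ --- then yields both parts at once, with no appeal to Proposition~\ref{branch to point}. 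Your reduction of descent questions over $F_U$ and $F$ to descent over $F_\eta'$ (via base change and the observation that an \'etale algebra whose further base change is a field is itself a field) is correct and is the same mechanism the paper uses.
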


\begin{proof}
Let $\eta$ be the generic point of $X_0$,
and let $F_\eta'$ be the algebraic closure of $F_\eta$ in $F_\wp$.
Note that $F_\eta$ contains $F_U$ since $\eta \in U$, and so
$F_\eta'$ contains the algebraic closure of $F_U$ in $F_\wp$. 
We will show the following statement, which implies both parts of the proposition:
There is a degree $p$ Galois field extension $E_P$ of $F_P$ such that 
$E_\wp:=E_P \otimes_{F_P} F_\wp$ is a degree $p$ Galois field extension of $F_\wp$
that is not induced by any degree $p$ field extension of $F_\eta'$.

Let $\til X_0$ be the normalization of $X_0$ and let $\til \wp$ be the branch on $\til X_0$ lying over $\wp$; this is the unique branch of $\til X_0$ at some point $\til P$ of $\til X_0$ lying over $P$.  
The residue field $k(\wp)$ of $\wh R_\wp$ at $\wp$ is 
also the residue field of $R_\wp \subset \wh R_P$ at $\wp$; it is also
the fraction field of the complete local ring 
$\wh{\mc O}_{\til X_0,\til P}$ of $\til X_0$ at $\til P$.

By Artin's Approximation Theorem (Theorem~1.10 of \cite{artin}), 
the henselization $\mc O_{\til X_0,\til P}\h$ of $\mc O_{\til X_0,\til P}$ is algebraically closed in the completion $\wh{\mc O}_{\til X_0,\til P}$.  
So the algebraic closure of $k(\til X_0) = k(X_0) = k(\eta)$ in $k(\wp)$
is the fraction field of $\mc O_{\til X_0,\til P}\h$, which is separable over $k(\eta)$.
Let $\alpha \in k(\wp)$ be an element that is transcendental over 
$k(X_0)$; i.e., does not lie in the fraction field of $\mc O_{\til X_0,\til P}\h$.

First consider the case in which $\cha K = p$.   
That is, $K$ is a complete discretely valued field of equal characteristic $p$, hence the form $k((t))$,
with $F_\eta = k(\eta)((t))$ and $F_\wp = k(\wp)((t))$.
We will regard $k(\wp)$ as contained in $R_\wp$, and hence in $F_P$ and $F_\wp$;
and in particular we will regard $\alpha$ as an element of those fields.
Let $E_P$ be the degree $p$ Galois field extension of $F_P$ given by adjoining a root of 
$Y^p-Y-\alpha/t$, and let $E_\wp = E_P \otimes_{F_P} F_\wp$.  
We now apply Lemma~\ref{AS descent la}, taking $k_1$ equal to
the algebraic closure of $k(\eta)$ in $k(\wp)$, taking
$k_2=k(\wp)$, and taking $E=E_\wp$.  The lemma then says that $E_\wp$ has the asserted property, and this proves the result in the equal characteristic case.

Next, consider the case in which $\cha K = 0$.  Let $D$ be the local ring of $\til X_0$ at $\til P$, 
and let $\wh D = \wh{\mc O}_{\til X_0,\til P}$ be its completion at the point $\til P$.  Let $x$ be 
an element in the local ring of $\mc X$ at $\eta$ whose image $\bar x$ in the residue field $k(\eta) =k(\til X_0)$ is a uniformizer of $\til X_0$ at $\til P$.  Let $g \in R_\wp \subset \wh R_P$ be a lift of $\alpha \in k(\wp)$.  Then $E_P = F_P[Y]/(Y^p - g^p - x$) has the asserted property, by applying Lemma~\ref{Kummer descent la}, where we take $R_1=\wh R_\eta$, $R_2=\wh R_\wp$, and $E = 
E_\wp = E_P \otimes_{F_P} F_\wp$.
\end{proof}

Geometrically, the above proposition asserts in particular that if $\cha(k)=p>0$, then there is a degree $p$ Galois branched cover of $\Spec(\wh R_P) =\Spec(\wh{\mc O}_{\mc X,P})$ that is not induced by any degree $p$ branched cover of $\mc X$.

\section{Absolute Galois groups} \label{Gal gps sec}

\subsection{Injectivity of local-global maps on Galois groups} \label{inj subsec}

Given a finite group $G$, a field $L$, and a separable closure $L^\sep$ of $L$, 
homomorphisms $\Gal(L) := \Gal(L^\sep/L) \to G$ (which we require to be continuous) are in bijection with pairs consisting of a $G$-Galois \'etale $L$-algebra $E/L$ and an $L$-algebra homomorphism $E \to L^\sep$.  Here, an epimorphism $\phi:\Gal(L) \to G$ corresponds to the
$G$-Galois field extension $(L^\sep)^{\ker(\phi)}/L$ together with its inclusion into $L^\sep$.  A general homomorphism $\phi$ with image $H \subseteq G$ determines an $H$-Galois field extension $E_0/L$.  The induced $G$-Galois \'etale $L$-algebra $E
= \Ind_H^G(E_0)$ is equipped with a projection map $\Ind_H^G(E_0) \to E_0$,
and consequently a map to $L^\sep$ given by $E \to E_0 =
(L^\sep)^{\ker(\phi)} \subset L^\sep$.  (For further discussions, see \cite[Exp.~V]{SGA1} and \cite[Proposition~18.17]{KMRT}.)

Consider an inclusion of fields $L \subseteq E$.  If we pick a separable closure $E^\sep$ of $E$, then the 
separable closure of $L$ in $E^\sep$ is a separable closure $L^\sep$ of $L$ in the absolute sense.  There
is then an induced group homomorphism between the absolute Galois groups of $E$ and $L$; i.e., from 
$\Gal(E) := \Gal(E^\sep/E)$ to $\Gal(L) := \Gal(L^\sep/L)$.   This is a special case of the fact that a morphism
of pointed schemes $(V,v) \to (W,w)$ induces a homomorphism $\pi_1(V,v) \to \pi_1(W,w)$ between their 
\'etale fundamental groups.  In our situation, there is the following result about the homomorphism $\Gal(E) \to \Gal(L)$:

\begin{lemma} \label{Gal map la}
In the above situation, the map $\Gal(E) \to \Gal(L)$ induced by the inclusion $L^\sep \subseteq E^\sep$ factors as $\Gal(E) \twoheadrightarrow \Gal(EL^\sep/E) \hookrightarrow \Gal(L)$, and its image is  
$\Gal(E \cap L^\sep)$.  Thus the map is injective if and only if $E^\sep=EL^\sep$, and it is surjective if and only if $L$ is separably closed in $E$.
\end{lemma}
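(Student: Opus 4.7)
The plan is to identify the kernel and image of the restriction map $\sigma \mapsto \sigma|_{L^\sep}$ directly, and then read off the two equivalences.

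First I would compute the kernel. An automorphism $\sigma \in \Gal(E^\sep/E)$ maps to the trivial element of $\Gal(L)$ precisely when it fixes $L^\sep$ pointwise. Since it already fixes $E$, this is equivalent to fixing the compositum $EL^\sep \subseteq E^\sep$. Hence the kernel is $\Gal(E^\sep/EL^\sep)$, and the map factors as
\[
\Gal(E) \twoheadrightarrow \Gal(E)/\Gal(E^\sep/EL^\sep) = \Gal(EL^\sep/E) \to \Gal(L),
\]
using that $EL^\sep/E$ is Galois because it is the compositum of $E$ with the Galois extension $L^\sep/L$ inside $E^\sep$.

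Next I would identify the image using the standard ``translation theorem'' of Galois theory for compositums: since $L^\sep/L$ is Galois and $E/L$ is an intermediate extension situated inside a common overfield $E^\sep$, restriction induces an isomorphism
\[
\Gal(EL^\sep/E) \;\xrightarrow{\ \sim\ }\; \Gal(L^\sep/(E\cap L^\sep)).
\]
The target is a closed subgroup of $\Gal(L^\sep/L)=\Gal(L)$, namely $\Gal(E\cap L^\sep)$ (interpreting $L^\sep$ as a separable closure of $E\cap L^\sep$, which is legitimate since $L^\sep$ is separably closed and contains $E\cap L^\sep$). Thus the factorization in the statement holds, with the second arrow injective and image equal to $\Gal(E\cap L^\sep)$.

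Finally the two equivalences are immediate: the composition is injective iff its kernel $\Gal(E^\sep/EL^\sep)$ is trivial, iff $EL^\sep=E^\sep$; and it is surjective iff $\Gal(E\cap L^\sep)=\Gal(L)$, iff $E\cap L^\sep=L$, which is the statement that $L$ is separably closed in $E$. The only nonroutine ingredient is the compositum isomorphism in the second step, which is standard but needs $L^\sep/L$ Galois (automatic) and $E^\sep$ as ambient field to form $EL^\sep$; no real obstacle arises.
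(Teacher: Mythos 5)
Your proposal is correct and follows essentially the same route as the paper, which proves the lemma directly from the field diagram $E,\,L^{\mathrm{sep}} \subseteq EL^{\mathrm{sep}} \subseteq E^{\mathrm{sep}}$ and the isomorphism $\Gal(EL^{\mathrm{sep}}/E) \cong \Gal(L^{\mathrm{sep}}/(E \cap L^{\mathrm{sep}}))$; the paper justifies that isomorphism by noting that $E$ and $L^{\mathrm{sep}}$ are linearly disjoint over $E \cap L^{\mathrm{sep}}$ (since the latter is separably closed in $E$), which is the content of the ``translation theorem'' you invoke. The only cosmetic difference is that the paper also offers a citation to the Galois-category results in SGA~1 as an alternative.
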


This lemma is a special case of results on Galois categories in~\cite[Exp.~V.6]{SGA1}, with the injectivity and surjectivity assertions respectively following from Corollaire~6.8 and Proposition~6.9 there.  The lemma also follows directly from this diagram of fields and inclusions:
\[
\xymatrix @R=.4cm @C=2cm  {
E^\sep \ar@{-}[d] \ar@{-}[rdd]\\
EL^\sep \ar@{-}[rd] \ar@{-}[dd] & \\
& L^\sep \ar@{-}[dd]^{\Gal(\til L)}\\
E \ar@{-}[rd] \ar@{-}[rdd]\\
& \ \til L:=E \cap L^\sep \ar@{-}[d]\\
& L
}
\]
Here the fields $E$ and $L^\sep$ are linearly disjoint over $\til L$, because $\til L$ is separably closed in $E$; and so the natural map $\Gal(EL^\sep/E) \to \Gal(\til L)$ is an isomorphism.

In the above situation, if a different separable closure of $L$ had been chosen, along with some embedding into $E^\sep$, then the homomorphism $\Gal(E) \to \Gal(L)$ would be modified by conjugation, but 
the injectivity and surjectivity of 
$\Gal(E) \to \Gal(L)$ would not be affected. 

We may apply the lemma in the situation of Notation~\ref{basic notation}, to the field extensions 
$F \to F_P$, $F \to F_U$, $F_P \to F_\wp$, and $F_U \to F_\wp$, where $\wp$ is a branch of $X$ at $P$ lying on $U$.  We then obtain:

\begin{thm} \label{inj Gal maps}
Let $F$ be a semi-global field, and consider field extensions $F\subseteq F_P,F_U\subseteq F_\wp$ as in Notation~\ref{basic notation}. Then
the induced maps $\Gal(F_\wp) \to \Gal(F_P)$ and $\Gal(F_U) \to \Gal(F)$ between absolute Galois groups are injective.  If the branch $\wp$ lies on $U$, then the induced maps $\Gal(F_\wp) \to \Gal(F_U)$ and $\Gal(F_P) \to \Gal(F)$ are injective if and only if the residue field $k$ has
characteristic zero.  The above maps are never surjective.
\end{thm}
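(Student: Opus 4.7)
The plan is to apply Lemma~\ref{Gal map la} to each of the four field inclusions. By that lemma, injectivity of the induced map $\Gal(E)\to\Gal(L)$ (for $L\subseteq E$) is equivalent to $E^{\sep}=E\cdot L^{\sep}$, which in turn follows whenever every finite separable extension of $E$ is induced from a finite separable extension of $L$; and surjectivity is equivalent to $L$ being separably closed in $E$. The strategy is to translate each assertion into one of these criteria and then verify it using the descent results of Section~\ref{descent sec}, the counterexamples of Section~\ref{char p subsec}, or a direct application of Hensel's lemma.

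For the unconditional injectivity of $\Gal(F_\wp)\to\Gal(F_P)$ and $\Gal(F_U)\to\Gal(F)$, I appeal to Propositions~\ref{branch to point} and~\ref{component to global}, which provide the required descent of every finite separable extension; the Lemma then yields injectivity in both cases.

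For the characteristic-zero direction of injectivity of $\Gal(F_\wp)\to\Gal(F_U)$ and $\Gal(F_P)\to\Gal(F)$, the descent is supplied by Proposition~\ref{branch to component} (applicable because $\wp$ lies on $U$) and by Theorem~\ref{point to global}, respectively. The step I expect to require the most care is the invocation of Theorem~\ref{point to global}: as stated it requires every irreducible component of $X$ through $P$ to be unibranched at $P$, so one must verify that this hypothesis can be arranged without loss of generality by replacing $\mc X$ with a suitable normal modification---an operation that must not alter $F_P$ or the Galois map $\Gal(F_P)\to\Gal(F)$. For the converse direction in positive residue characteristic, Proposition~\ref{counterexample prop} directly produces degree $p$ Galois extensions of $F_\wp$ and $F_P$ that cannot be induced even from any finite extension of the algebraic closure of $F_U$ in $F_\wp$ (respectively, of $F$ in $F_P$); the description of the image of the Galois map in Lemma~\ref{Gal map la} as $\Gal(E\cap L^{\sep})$ then shows that these extensions witness non-injectivity.

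Finally, for non-surjectivity in all four cases, by Lemma~\ref{Gal map la} it suffices to produce a nontrivial finite separable algebraic extension of the smaller field contained in the larger. Such an extension can always be constructed via Hensel's lemma, applied to a suitable separable polynomial over the smaller ring whose reduction modulo the maximal ideal of the relevant complete local ring has a simple root but which itself does not split over the smaller field; the lifted root generates the desired proper separable extension.
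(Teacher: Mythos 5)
Your overall strategy --- reducing each assertion to the criteria of Lemma~\ref{Gal map la} and feeding in the descent results and counterexamples of Section~\ref{descent sec} --- is exactly the paper's strategy, and most of your steps go through as described. But the step you yourself flag as delicate, the injectivity of $\Gal(F_P)\to\Gal(F)$ in residue characteristic zero when some component of $X$ is not unibranched at $P$, cannot be handled the way you propose. You suggest arranging the unibranched hypothesis ``by replacing $\mc X$ with a suitable normal modification'' that leaves $F_P$ unchanged. No such modification exists: the branches of $X$ at $P$ are the height one primes of $\wh R_P$ containing $t$, so they are intrinsic to $\wh R_P$; a modification away from $P$ does not change them, and a modification at $P$ removes the point $P$ altogether. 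Indeed, if such a modification existed, Theorem~\ref{point to global} applied to the new model would let every finite separable extension of $F_P$ descend to $F$, contradicting Remark~\ref{char p descent rks}(\ref{unibranch rk}). The paper's actual argument instead passes to a finite Galois \emph{split cover} $\mc X'\to\mc X$ (via \cite[Proposition~6.2]{HHK:H1}) whose closed fiber is unibranched at every point over $P$; since the cover is split, $F_P\cong F'_{P'}$ for $P'$ over $P$, where $F'$ is the function field of $\mc X'$, and since $F'/F$ is a finite separable extension contained in the algebraic closure of $F$ in $F_P$, the map $\Gal(F_P)\to\Gal(F)$ factors as $\Gal(F_P)\to\Gal(F')\hookrightarrow\Gal(F)$, the first map being injective by Theorem~\ref{point to global} applied to $\mc X'$.

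A second, smaller gap is in the converse direction for $\cha(k)=p>0$: Proposition~\ref{counterexample prop} only rules out inducing $E_\wp$ from a \emph{degree $p$} extension of the separable closure $\til F_U$ of $F_U$ in $F_\wp$, whereas non-injectivity requires showing $E_\wp\not\subseteq F_\wp F_U^{\sep}$. One still has to argue that a containment $E_\wp\subseteq F_\wp E$ for some finite Galois extension $E/\til F_U$ would force such a degree $p$ descent; the paper does this using linear disjointness of $E$ and $F_\wp$ over $\til F_U$ and passing to the fixed field in $E$ of $\Gal(F_\wp E/E_\wp)$. Your appeal to the description of the image as $\Gal(E\cap L^{\sep})$ addresses surjectivity, not the kernel, so it does not substitute for this argument. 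The remaining parts of your outline (the two unconditional injectivities, the characteristic zero case of $\Gal(F_\wp)\to\Gal(F_U)$, and non-surjectivity, which the paper treats as immediate since the bottom field is never separably closed in the top field) are fine.
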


\begin{proof}
The last assertion is immediate from Lemma~\ref{Gal map la}, since in each of the corresponding field extensions, the bottom field is not separably closed in the top field.

By Proposition~\ref{branch to point}, every finite separable extension of $F_\wp$ is the compositum of a finite separable extension of $F_P$ with $F_\wp$.  Thus $F_\wp^\sep = F_\wp F_P^\sep$.  So by Lemma~\ref{Gal map la}, 
$\Gal(F_\wp) \to \Gal(F_P)$ is an injection.  Similarly, $\Gal(F_U) \to \Gal(F)$ is injective, using 
Proposition~\ref{component to global}.  If $\cha(k)=0$, then Proposition~\ref{branch to component} implies that
$\Gal(F_\wp) \to \Gal(F_U)$ is injective.  If, in addition, $P$ is a unibranched point of each component of $X$ on which it lies, then Theorem~\ref{point to global} implies that $\Gal(F_P) \to \Gal(F)$ is injective.  

If $\cha(k)=0$ but we do not assume that each of these components is unibranched at $P$, then 
by~\cite[Proposition~6.2]{HHK:H1} there exists a finite Galois split cover $\mc X' \to \mc X$
such that for each closed point $P'\in \mc X'$ lying over $P \in \mc X$, each irreducible component of the closed fiber $X'$ of $\mc X'$ is unibranched at $P'$.  (Recall from~\cite[Section~5]{HHK:H1} that a degree $n$ morphism $\mc X' \to \mc X$ is a {\em split cover} if $\mc X' \times_{\mc X} Q$ consists of $n$ copies of $Q$ for every point $Q \in \mc X$ other than the generic point of $\mc X$.)  
Every split cover is necessarily \'etale; and if we choose a point 
$P' \in \mc X'$ lying over $P$ then the inclusion $F_P \hookrightarrow F'_{P'}$ is an isomorphism, where $F'$ is the function field of $\mc X'$.  Since $F' \subset F'_{P'}$ and since the extension $F'/F$ is algebraic, 
we obtain an inclusion of $F'$ in the algebraic closure of $F$ in $F_P$.  So the map $\Gal(F_P) \to \Gal(F)$ factors through the inclusion $\Gal(F') \hookrightarrow \Gal(F)$.  
The map $\Gal(F_P) \to \Gal(F')$ is injective by Theorem~\ref{point to global} applied to the model~$\mc X'$; and so $\Gal(F_P) \to \Gal(F)$ is also injective.

For the converse, suppose that $\cha k=p>0$.  By 
Proposition~\ref{counterexample prop}(\ref{char p counterex}),
there is 
a degree~$p$ Galois field extension $E_\wp/F_\wp$ that is not induced by any degree $p$ separable field extension of the separable closure $\til F_U$ of $F_U$ in $F_\wp$.  If $E_\wp \subseteq F_\wp^\sep$ is contained in $F_\wp F_U^\sep = F_\wp {\til F_U}^\sep$, then there is a finite Galois extension $E/\til F_U$, say with group $G$, such that $E_\wp \subseteq F_\wp E$.  But $E$ and $F_\wp$ are linearly disjoint over $\til F_U$, since $\til F_U$ is separably closed in $F_\wp$.  So the compositum $F_\wp E$ is a Galois field extension of $F_\wp$ having group $G$.  Let $N = \Gal(F_\wp E/E_\wp)$; this is a normal subgroup of index $p$.  The fixed field $E^N$ is then a degree $p$ separable extension of $\til F_U$ that induces $E_\wp$.  This is a contradiction, showing that actually 
$E_\wp$ is not contained in $F_\wp F_U^\sep$.  Thus $F_\wp^\sep$ is strictly larger than $F_\wp F_U^\sep$, and hence the map $\Gal(F_\wp) \to \Gal(F_U)$ is not injective.  Similarly, using the extension $E_P/F_P$ in  
Proposition~\ref{counterexample prop}(\ref{mixed char counterex}), we deduce that the map $\Gal(F_P) \to \Gal(F)$ is not injective if 
$\cha k \ne 0$.
\end{proof}

\subsection{Van Kampen's theorem for diamonds} \label{van Kampen subsec}

In this section, we prove an analog of van Kampen's theorem in our context.  In the situation of Notation~\ref{geom notn}, the simplest case is the one in which $\mc P, \mc U, \mc B$ each contain just one element.  That is, the closed fiber $X$ of the normal model $\mc X$ is irreducible; $\mc P$ consists of a single unibranched closed point $P$ of $X$; the unique element $U \in \mc U$ is the complement of $P$ in $X$; and the element of $\mc B$ is the unique branch $\wp$ of $X$ at $P$.
Thus $F \subset F_P,\!F_U \subset F_\wp$, and we have a diamond of fields as at the beginning of Section~\ref{descent char 0}.  
In our result, we express the absolute Galois group of our field~$F$ as the amalgamated product of the absolute Galois groups of $F_P,F_U$ over that of $F_\wp$.  This parallels the usual form of van Kampen's theorem in topology, which concerns a space $S = S_1 \cup S_2$ with $S, S_1, S_2$, and $S_0:=S_1 \cap S_2$ connected, and which expresses the fundamental group of $S$ as the amalgamated product of the fundamental groups of $S_1,S_2$ over that of $S_0$.  There, one takes fundamental groups with respect to a common base point in $S_0$; here we will take absolute Galois groups with respect to a chosen separable closure of $F_\wp$ and corresponding separable closures of $F,F_P,F_U$.  See Theorem~\ref{diamond vK}.  Afterwards, in Theorem~\ref{dir lim patches thm} and Theorem~\ref{general vK}, we prove analogous results that consider more general choices of $\mc P, \mc U, \mc B$.
 
For the proof of our analog of van Kampen's theorem, it will be convenient to use the language of torsors.  Let $L$ be a field with separable closure $L^\sep$, and let $G$ be a finite group.  
As discussed at the beginning of Section~\ref{inj subsec}, the group homomorphisms $\phi:\Gal(L) = \Gal(L^\sep/L) \to G$ are in natural bijection with isomorphism classes of pairs consisting of a $G$-Galois \'etale $L$-algebra $E$ together with an $L$-algebra homomorphism $i:E \to L^\sep$.  
On the geometric level, $\Spec(E)$ is a $G$-torsor over $L$, corresponding to the cohomology class $\sigma \in H^1(L,G)$ of the cocycle $\phi \in \Hom(\Gal(L),G) = Z^1(L,G)$.  
(Here $G$ is regarded as a constant finite group scheme over $L$, and so the action of $\Gal(L)$ on $G$ is trivial.)  This torsor over $L$ is {\em geometrically pointed}; i.e., it is equipped with a distinguished $L^\sep$-point, corresponding to the $L$-algebra map $i:E \to L^\sep$.  
A {\em morphism} between two geometrically pointed $G$-torsors over $L$ consists of a morphism of torsors that carries the first base point to the second.  
Thus the isomorphism classes of geometrically pointed $G$-torsors over $L$ are in natural bijection with
$\Hom(\Gal(L),G)$.  Note that there is at most one morphism (necessarily, an isomorphism) between any two geometrically pointed $G$-torsors over $L$,
since a morphism of torsors is determined by the image of a given geometric point.  In particular, a geometrically pointed $G$-torsor has no non-trivial automorphisms.  

\begin{prop} \label{ptd equiv diamond}
Let $F$ be a semi-global field, and let $X$ be the closed fiber of a normal model $\mc X$.  Assume 
that $\mc P, \mc U, \mc B$ as in Notation~\ref{geom notn} each consist of a single element, $P,U,\wp$.
Let $F^\sep,F_P^\sep,F_U^\sep$ be the separable closures of $F,F_P,F_U$ in 
a fixed separable closure $F_\wp^\sep$ of $F_\wp$.  
Let $G$ be a finite group, and consider geometrically pointed $G$-torsors over $F,F_P,F_U, F_\wp$ with respect to the above separable closures.
Then base change induces a bijection between
\renewcommand{\theenumi}{\roman{enumi}}
\renewcommand{\labelenumi}{(\roman{enumi})}
\begin{enumerate}
\item the set of isomorphism classes of geometrically pointed $G$-torsors over $F$; and 
\item the set of pairs consisting of isomorphism classes of geometrically pointed $G$-torsors over $F_P$ and over $F_U$ that induce the same isomorphism class over $F_\wp$.
\end{enumerate}
\end{prop}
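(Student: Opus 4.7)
The plan is to reduce the claim to the patching equivalence of categories over the diamond $F,F_P,F_U,F_\wp$ (Theorem~7.1 of \cite{HH:FP}), using throughout the fact that a morphism of geometrically pointed $G$-torsors is unique when it exists (and so such torsors have no nontrivial automorphisms). The base-change map from (i) to (ii) is evidently well-defined; I will construct an inverse and check that both compositions are the identity.

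For surjectivity, starting from a pair of geometrically pointed $G$-torsors $(\Spec(E_P),i_P)$ and $(\Spec(E_U),i_U)$ whose base changes to $F_\wp$ agree as a pointed torsor $(\Spec(E_\wp),i_\wp)$, I would first invoke uniqueness of pointed morphisms to extract canonical patching isomorphisms $E_P\otimes_{F_P}F_\wp\cong E_\wp\cong E_U\otimes_{F_U}F_\wp$, and then apply the patching equivalence to obtain a $G$-Galois \'etale $F$-algebra $E$ whose base changes to $F_P,F_U,F_\wp$ recover the given data. To equip $E$ with a pointing, I would consider the composition $\psi\colon E\to E\otimes_F F_\wp\cong E_\wp\xrightarrow{i_\wp}F_\wp^\sep$. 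Its image is a finitely generated $F$-subalgebra of the field $F_\wp^\sep$, hence itself a field; its elements are separable algebraic over $F$ because $E$ is \'etale over $F$, so the image lies in $F^\sep$. This yields the desired $F$-algebra map $i\colon E\to F^\sep$. To verify that the base change of $i$ along $F\subseteq F_P$ agrees with $i_P$ (and similarly for $F_U$), I would observe that both candidate $F_P$-algebra maps $E_P\to F_P^\sep$ become equal after composition with the inclusion $F_P^\sep\hookrightarrow F_\wp^\sep$ (each being the map arising from the $F_\wp$-pointing), and hence coincide.

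For injectivity, suppose $(E_1,i_1)$ and $(E_2,i_2)$ are pointed $G$-torsors over $F$ whose base changes to $F_P$ and $F_U$ are pointed-isomorphic via unique isomorphisms $\phi_P$ and $\phi_U$. I would further base change to $F_\wp$ and observe that $\phi_P\otimes F_\wp$ and $\phi_U\otimes F_\wp$ are both pointed isomorphisms between the same pointed torsors over $F_\wp$, hence must coincide. The patching equivalence applied to morphisms then supplies a unique $F$-algebra isomorphism $\phi\colon E_1\to E_2$ restricting to $\phi_P$ and $\phi_U$. That $\phi$ is pointed reduces once more to the injectivity of $F^\sep\hookrightarrow F_P^\sep$, since $i_2\circ\phi$ and $i_1$ agree after that inclusion.

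The main obstacle I foresee is the bookkeeping of geometric pointings at all four levels, and in particular the consistent construction of a pointing on the patched algebra $E$ from the data over $F_\wp$. The uniqueness principle for morphisms of pointed torsors, together with the chain of injections $F^\sep\subseteq F_P^\sep,F_U^\sep\subseteq F_\wp^\sep$, is what makes the various compatibility checks collapse into routine verifications. In particular, I do not expect to need the descent results of Section~\ref{descent sec} nor Theorem~\ref{inj Gal maps}, since the desired compatibility is already built into the definition of a geometrically pointed torsor.
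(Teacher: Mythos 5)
Your proposal is correct and follows essentially the same route as the paper: the paper deduces this proposition as a special case of a more general multipointed-torsor statement (Proposition~\ref{pointed patching}), whose proof is exactly your two steps --- patching via \cite[Theorem~7.1]{HH:FP} for essential surjectivity and for gluing the unique local isomorphisms, with all compatibilities collapsing because geometrically pointed torsors admit at most one morphism between any two objects, and with the pointing on the patched algebra $E$ obtained, as you do, from the common restriction $E \to F_\wp^\sep$ whose image lands in $F^\sep$ since $E$ is finite \'etale over $F$. The only cosmetic remark is that the image of $E$ in $F_\wp^\sep$ is a field because it is \emph{finite} (integral) over $F$ and a domain, not merely because it is a finitely generated subalgebra of a field.
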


This proposition is the key step in proving our van Kampen theorem for diamonds:

\begin{thm} \label{diamond vK}
Let $F$ be a semi-global field, and assume that $\mc P, \mc U, \mc B$ as in Notation~\ref{geom notn} each contain just one element.  
Let $F^\sep,F_P^\sep,F_U^\sep$ be the separable closures of $F,F_P,F_U$ in 
a fixed separable closure $F_\wp^\sep$ of $F_\wp$.  Then 
\[\Gal(F) = \Gal(F_P) *_{\Gal(F_\wp)} \Gal(F_U),\] 
the amalgamated product of $\Gal(F_P)$ with $\Gal(F_U)$ over $\Gal(F_\wp)$.
\end{thm}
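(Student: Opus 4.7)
The plan is to deduce the theorem as a formal consequence of Proposition~\ref{ptd equiv diamond}, by converting the statement about geometrically pointed torsors into a statement about continuous group homomorphisms and recognizing the result as the universal property of the amalgamated product in the category of profinite groups.

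First, I will translate Proposition~\ref{ptd equiv diamond} into Galois-theoretic language. As recalled at the start of Section~\ref{inj subsec}, for any field $L$ with chosen separable closure and any finite group $G$, isomorphism classes of geometrically pointed $G$-torsors over $L$ correspond naturally and bijectively to continuous homomorphisms $\Gal(L) \to G$, with base change of torsors corresponding to restriction of homomorphisms along inclusions of absolute Galois groups. Applying this dictionary simultaneously to $F, F_P, F_U, F_\wp$ with respect to the compatible system of separable closures fixed in the statement, Proposition~\ref{ptd equiv diamond} becomes the assertion that for every finite group $G$, the natural restriction map
\[
\Hom(\Gal(F), G) \;\longrightarrow\; \Hom(\Gal(F_P), G) \times_{\Hom(\Gal(F_\wp), G)} \Hom(\Gal(F_U), G)
\]
is a bijection.

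Second, I will read this as the universal property of the amalgamated product. Set $\Gamma := \Gal(F_P) *_{\Gal(F_\wp)} \Gal(F_U)$, meaning the coproduct in the category of profinite groups of $\Gal(F_P)$ and $\Gal(F_U)$ under $\Gal(F_\wp)$. By definition of $\Gamma$, for every finite (equivalently, profinite) group $G$ the right hand side above equals $\Hom(\Gamma, G)$ naturally in $G$. The canonical pair of maps $\Gal(F_P) \to \Gal(F)$ and $\Gal(F_U) \to \Gal(F)$ is compatible over $\Gal(F_\wp)$ and hence factors through a canonical continuous homomorphism $\alpha : \Gamma \to \Gal(F)$.

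Third, I will verify that $\alpha$ is an isomorphism. The map induced by $\alpha$ on $\Hom(-, G)$ for finite $G$ agrees, under the identifications just set up, with the bijection coming from Proposition~\ref{ptd equiv diamond}. Thus $\alpha$ induces a bijection $\Hom(\Gal(F), G) \to \Hom(\Gamma, G)$ for every finite group $G$. Since any profinite group is the inverse limit of its finite continuous quotients, a continuous homomorphism between profinite groups is an isomorphism as soon as it induces a bijection on $\Hom$-sets to every finite group; therefore $\alpha$ is an isomorphism.

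The substantive obstacle is concentrated entirely in Proposition~\ref{ptd equiv diamond}, which in turn relies on the patching machinery of \cite{HH:FP} together with the descent results of Section~\ref{descent sec}. Once that equivalence is in hand, the passage to the van Kampen statement is purely formal; the only care required is to track base points (i.e., the embeddings of the various separable closures into $F_\wp^{\sep}$) so that the bijection of Proposition~\ref{ptd equiv diamond} is indeed natural in $G$ and compatible with the canonical maps between the absolute Galois groups.
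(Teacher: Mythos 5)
Your proposal is correct and follows essentially the same route as the paper: the paper likewise defines the amalgamated product via the universal property (direct limit) tested against finite groups $G$, identifies $\Hom(\Gal(-),G)$ with isomorphism classes of geometrically pointed $G$-torsors, and concludes from Proposition~\ref{ptd equiv diamond}. Your added justification that bijectivity of $\Hom(-,G)$ for all finite $G$ detects isomorphisms of profinite groups is a point the paper leaves implicit, but the argument is the same.
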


\begin{proof}
The theorem asserts that $\Gal(F)$ is
the direct limit of the directed system consisting of the other three groups, with corresponding diagram 
\begin{equation} \label{Gal diamond}
\begin{gathered} 
\xymatrix @R=.7cm @C=.1cm  {
& \Gal(F_\wp) \ar[ld] \ar[rd] &\\
\Gal(F_P) \ar[rd] && \Gal(F_U) \ar[ld]\\
& \Gal(F) & .
}
\end{gathered}
\end{equation} 
That is, for every finite group $G$, the natural map of sets
\[\Hom(\Gal(F),G) \to \Hom(\Gal(F_P),G) \times_{\Hom(\Gal(F_\wp),G)} \Hom(\Gal(F_U),G)\]
is a bijection.  But as noted above, $\Hom(\Gal(F),G)$ is in natural bijection with the set of isomorphism classes of geometrically pointed $G$-torsors over $F$, and similarly for $F_P, F_U, F_\wp$.  So the assertion follows from Proposition~\ref{ptd equiv diamond}. 
\end{proof}

\begin{remark}
\begin{enumerate}
\renewcommand{\theenumi}{\alph{enumi}}
\renewcommand{\labelenumi}{(\alph{enumi})}  
\item
If one considers schemes rather than function fields, then 
the analog of Theorem~\ref{diamond vK} holds but not that of Theorem~\ref{inj Gal maps}.  More precisely, take a nonempty finite set of closed points of the generic fiber
of $\mc X$; let $\Sigma$ be its closure in $\mc X$; and let $\Sigma_P, \Sigma_U, \Sigma_\wp$ be the pullbacks of $\Sigma$ from $\mc X$ to $\Spec(\wh R_P), \Spec(\wh R_U), \Spec(\wh R_\wp)$ respectively.  
Replacing $\Gal(F),\Gal(F_P),\Gal(F_U),\Gal(F_\wp)$ in Theorem~\ref{diamond vK} by the fundamental groups of 
$\mc X\smallsetminus{}\Sigma$, $\Spec(\wh R_P)\smallsetminus{}\Sigma_P$, $\Spec(\wh R_U)\smallsetminus{}\Sigma_U$,
$\Spec(\wh R_{\wp})\smallsetminus{}\Sigma_{\wp}$, the analog of Theorem~\ref{diamond vK} holds, by using 
formal patching (e.g.\ \cite[Theorem~3.2.8]{Har:MSRI}) instead of patching over fields.  
But the analog of Theorem~\ref{inj Gal maps} fails if we let 
$\mc X = \mbb P^1_{k[[t]]}$, $U = \mbb A^1_k$, and $\Sigma = (x=0)$, with $\cha(k)=0$.
Namely, $y^n=x$ defines a branched cover of $\Spec(\wh R_U)$ unramified away from $\Sigma_U$, but it is not induced by a branched cover of $\mc X\smallsetminus{}\Sigma$.  Thus, as in Lemma~\ref{Gal map la}, the homomorphism $\pi_1(\Spec(\wh R_U)\smallsetminus{}\Sigma_U) \to
\pi_1(\mc X\smallsetminus{}\Sigma)$ is not injective.  Similarly, if $P$ is the point $x=0$ on the closed fiber, the map $\pi_1(\Spec(\wh R_P)\smallsetminus{}\Sigma_P) \to\pi_1(\mc X\smallsetminus{}\Sigma)$ is not injective.

\item 
The absolute Galois groups $\Gal(F)$, $\Gal(F_P)$, and $\Gal(F_U)$
that arise in Theorem~\ref{diamond vK} have interesting structures.  In particular,  
every finite group is a quotient of each of these profinite groups.  In the case of the group $\Gal(F)$, this was shown for $F=K(x)$ in \cite[Corollary~2.4]{Ha:GCAL}; for a more general semi-global field $F$, the assertion is a special case of \cite[Theorem~2.7]{Pop:SC}.  For the groups $\Gal(F_P)$ and $\Gal(F_U)$, the assertion is given by \cite[Corollary~3.20]{Lef}.  On the other hand, none of these absolute Galois groups are free; this is because the fields $F ,F_P, F_U$ each have cohomological dimension greater than one, and so they are not even projective (see \cite[Proposition~11.6.6, Corollary~22.4.6]{FJ}).  The group $\Gal(F_\wp)$ is also not free, because $F_\wp$ is a complete discretely valued field.  In general, not every finite group is a quotient of $\Gal(F_\wp)$.  For example, if $K=\mbb C((t))$ and $F=K(x)$, with $P$ being the point $x=0$ on the closed fiber $\mbb P^1_\mbb C$ of $\mbb P^1_K$, then $F_\wp$ is isomorphic to $\mbb C((x))((t))$, and so the finite quotients of $\Gal(F_\wp)$ are just the metacyclic groups. 
\end{enumerate}
\end{remark}

Proposition~\ref{ptd equiv diamond} follows from patching of $G$-torsors (or equivalently, of $G$-Galois \'etale algebras; see \cite[Theorem~7.1]{HH:FP}) combined with the fact that a geometrically pointed $G$-torsor has no non-trivial automorphisms.  Below we prove a more general result, 
Proposition~\ref{pointed patching}, 
which will be used in obtaining variants of Theorem~\ref{diamond vK} in the next section, and which concerns torsors that are equipped with a family of geometric points, rather than just one such point.   
More precisely, let $L$ be a field and $G$ a finite group. Let $\ms S = \{L_s\}_{s \in S}$ be a nonempty indexed set of
separable closures $L_s$ of $L$. 
Define an $\ms S$-{\em multipointed $G$-torsor}
over $L$ to be a pair $(Z, \{Q_s\}_{s \in S})$, where $Z$ is a $G$-torsor
over $L$, and $Q_s: \Spec(L_s) \to Z$ is an $L$-morphism for each $s \in S$, i.e., an $L_s$-point of $Z$.  A {\em morphism} of $\ms S$-multipointed $G$-torsors is a morphism of the underlying torsors that carries the chosen geometric points of the first torsor to the corresponding points of the second.  Write $\ms T_G^{\ms S}(L)$ for the category of $\ms S$-multipointed $G$-torsors over $L$.  Since the indexed set $\ms S$ is nonempty, the objects in this category have no non-trivial automorphisms, and between any two objects there is at most one morphism (necessarily an isomorphism); in the terminology of \cite[Tag 02XZ]{stacks-project}, 
one says that $\ms T_G^{\ms S}(L)$ is a {\em setoid}.

Let $F$ be a semi-global field, and let $\mc P, \mc U, \mc B$ be as in Notation~\ref{geom notn}.
For each branch $\wp
\in \mc B$, choose a separable closure $F^\sep_\wp$ of $F_\wp$, and let $\ms S_\wp$
denote the singleton set consisting of $F^\sep_\wp$. If a branch $\wp$ lies on $U
\in \mc U$ at $P \in \mc P$, then we let $F^\sep_U(\wp)$ (respectively $F^\sep_P(\wp)$) denote the separable closure of $F_U$ (resp.\ $F_P$) in
$F^\sep_\wp$.
Let $\ms S_U$ (respectively $\ms S_P$) denote the indexed collection of fields of the form $F_U^\sep(\wp)$ (resp.~$F^\sep_P(\wp)$), where $\wp$ ranges over the branches on $U$ (resp.\ at $P$). 
Finally, let $F^\sep(\wp)$ denote the separable closure of $F$ in $F^\sep_\wp$ and let $\ms S$ denote the indexed collection of fields $\{F^\sep(\wp)\}_{\wp \in \mc B}$.
We define product categories
\[
\ms T_G^{\mc U} = \prod_{U \in \mc U} \ms T_G^{\ms S_U}(F_U), \ \
\ms T_G^{\mc P} = \prod_{P \in \mc P} \ms T_G^{\ms S_P}(F_P), \ \
\ms T_G^{\mc B} = \prod_{\wp \in \mc B} \ms T_G^{\ms S_\wp}(F_\wp). \]

The natural inclusions of fields $F^\sep(\wp) \subset F^\sep_U(\wp),\!F^\sep_P(\wp) \subset F^\sep_\wp$ induce functors
\[\ms T_G^{\ms S}(F) \longrightarrow \ms T_G^{\mc U},\!\ms T_G^{\mc P} \longrightarrow \ms T_G^{\mc B}.\]

Recall that if $\alpha_i:\mc C_i\rightarrow \mc C_0$ are functors ($i=1,2$), the 2-fiber product category $\mc C_1\times_{\mc C_0}\mc C_2$ (with respect to $\alpha_1$, $\alpha_2$) is the category whose objects are triples $(V_1,V_2,\phi)$ consisting of objects $V_i\in \mc C_i$ and an isomorphism $\phi: \alpha_1(V_1)\rightarrow \alpha_2(V_2)$ in $\mc C_0$. A morphism from $(V_1,V_2,\phi)$ to $(V_1',V_2',\phi')$ is a pair of morphisms $f_i:V_i\rightarrow V_i'$ in $\mc C_i$ ($i=1,2$) such that $\phi'\circ \alpha_1(f_1)=\alpha_2(f_2)\circ \phi$.

\begin{prop} \label{pointed patching}
The above functors induce an equivalence of categories
\[\ms T_G^{\ms S}(F) \to \ms T_G^{\mc U} \times_{\ms T_G^{\mc B}} \ms T_G^{\mc P}, \]
where the right hand side is a 2-fiber product of categories.
The isomorphism classes of objects in the category on the right hand side are in natural bijection with the (1-)fiber product of isomorphism classes of objects in the respective categories.
\end{prop}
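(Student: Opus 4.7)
The plan is to combine the patching equivalence for unpointed $G$-torsors from \cite[Theorem~7.1]{HH:FP} with the observation that a finite \'etale algebra has no geometric points outside the separable closure of its base field, and then exploit the setoid property of the multipointed categories to pass from the 2-fiber product to the 1-fiber product on isomorphism classes.

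First I would invoke \cite[Theorem~7.1]{HH:FP}, via the dictionary between $G$-torsors and $G$-Galois \'etale algebras, to obtain the equivalence of categories at the level of unpointed $G$-torsors: $G$-torsors over $F$ are equivalent to the 2-fiber product of $G$-torsors over the $F_U$ and the $F_P$, matched via base-change isomorphisms over the $F_\wp$. To upgrade to the multipointed setting I would use the following: if $E$ is a finite \'etale $F$-algebra, then every $F$-algebra map $E \to F^\sep_\wp$ has image in $F^\sep(\wp)$, because the image is a finite separable field extension of $F$ inside $F^\sep_\wp$. Consequently, for a $G$-torsor $Z$ over $F$ the natural inclusion $Z(F^\sep(\wp)) \hookrightarrow Z(F^\sep_\wp)$ is a bijection, and the analogous identities hold for $Z_{F_U}$ over $F^\sep_U(\wp) \subseteq F^\sep_\wp$ and for $Z_{F_P}$ over $F^\sep_P(\wp) \subseteq F^\sep_\wp$. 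Thus specifying an $\ms S$-multipointing of $Z$ amounts to giving an $F^\sep_\wp$-point of $Z_{F_\wp}$ for each $\wp \in \mc B$; under the unpointed patching equivalence these correspond bijectively to compatible $\ms S_U$- and $\ms S_P$-multipointings of the $Z_{F_U}$ and $Z_{F_P}$ whose base changes agree under the gluing isomorphism $\phi$. This yields the first assertion.

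For the isomorphism-class bijection, I would use that each of the categories $\ms T_G^{\ms S_U}(F_U)$, $\ms T_G^{\ms S_P}(F_P)$, $\ms T_G^{\ms S_\wp}(F_\wp)$ is a setoid, as noted before the proposition: any morphism is determined by its effect on the chosen geometric points, and is thus unique when it exists. In a 2-fiber product $\mc C_1 \times_{\mc C_0} \mc C_2$ of setoids, an isomorphism $(V_1, V_2, \phi) \to (V_1', V_2', \phi')$ is a pair $(f_1, f_2)$ satisfying $\phi' \circ \alpha_1(f_1) = \alpha_2(f_2) \circ \phi$, but this compatibility holds automatically once the $f_i$ exist, because morphisms in $\mc C_0$ are unique. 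Moreover, the existence of any such $\phi$ is equivalent to $\alpha_1(V_1) \cong \alpha_2(V_2)$. Hence the isomorphism classes of the 2-fiber product are parametrized by pairs $([V_1], [V_2])$ with $[\alpha_1(V_1)] = [\alpha_2(V_2)]$, which is precisely the 1-fiber product of iso-class sets. The main obstacle is the bookkeeping in the second paragraph, tracking how $\phi$ moves the chosen geometric points; this is reduced to a single-field compatibility by the identification of all the $F^\sep_\ast$-point sets with common $F^\sep_\wp$-point sets.
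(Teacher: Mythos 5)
Your proof is correct and follows essentially the same route as the paper's: both rest on the $G$-torsor patching equivalence of \cite[Theorem~7.1]{HH:FP}, the observation that any $F$-algebra homomorphism from a finite \'etale algebra into $F_\wp^\sep$ has image in $F^\sep(\wp)$ (and likewise over $F_U$ and $F_P$), and the setoid property to handle morphisms and to pass to isomorphism classes. The only difference is organizational: you first establish the unpointed equivalence and then transport the multipointing data across it via the identifications $Z(F^\sep(\wp)) = Z(F^\sep_\wp)$, whereas the paper verifies essential surjectivity and full faithfulness of the decorated functor directly.
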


Note that Proposition~\ref{ptd equiv diamond} is a special case of the second assertion in this proposition. 

\begin{proof}
The second assertion follows from the first since all the categories involved are setoids.  For the first assertion there are two steps.

	\smallskip

	{\em Step 1:} Essential surjectivity.

	An object of the right hand side of the map in the statement of the theorem corresponds to the following data: $G$-torsors $Z_P = \Spec(E_P), Z_U = \Spec(E_U), Z_\wp = \Spec(E_\wp)$ for all $P \in \mc P$, $U \in \mc U$, $\wp \in \mc B$; together with associated points $\zeta_\wp \in Z_\wp(F_\wp^\sep)$ and $\zeta_\xi(\wp) \in Z_\xi(F_\xi^\sep(\wp))$ for $\wp$ a point at (or on) $\xi \in \mc P \cup \mc U$;
	such that for each pair $\xi,\wp$ as above there exists an isomorphism (necessarily unique) of $G$-torsors $(Z_\xi)_{F_\wp} \to Z_\wp$ that takes $\zeta_\xi(\wp)$ to $\zeta_\wp$. Thus we obtain a patching problem of $G$-torsors, or equivalently of $G$-Galois algebras, which has a solution that is unique up to isomorphism (by \cite[Theorem~7.1]{HH:FP}); viz., a $G$-torsor $Z = \Spec(E)$ over $F$ that induces each of the torsors $Z_P$, $Z_U$, $Z_\wp$ compatibly.
	If $\wp$ is a branch at $P$ on $U$,
	the points $\zeta_{P,\wp},\zeta_{U,\wp},\zeta_\wp$ correspond to $F$-homomorphisms
	$E_P \to F_P^\sep(\wp) \subset F_\wp^\sep, E_U \to F_U^\sep(\wp) \subset F_\wp^\sep, E_\wp \to F_\wp^\sep$ such that the first two are restrictions of the third.  These homomorphisms thus restrict to a
	common $F$-homomorphism $i_\wp:E \to F_\wp^\sep$.  Since $E$ is a finite \'etale $F$-algebra,
	the image of $i_\wp$ lies in $F^\sep(\wp)$, the separable closure of $F$ in $F_\wp^\sep$.
	It follows that
	the pair $(Z,\{i_\wp\}_{\wp \in \mc B})$ maps to the isomorphism class of our initially chosen~object.

	\smallskip

	{\em Step 2:} Full faithfulness.

	Since the categories in question are setoids, we need only check that objects that become isomorphic under our functor were isomorphic to start with.

	Consider two objects
	$(Z,\{i_s\}_{s \in S})$, $(Z',\{i_s'\}_{s \in S})$ from the left hand side.
	Since they have isomorphic images, the induced objects on the right hand side are isomorphic
	as multipointed $G$-torsors;
	i.e., for each $\xi \in
	\mc P \cup \mc U \cup \mc B$ there is a (unique) torsor isomorphism $j_\xi:Z_{F_\xi} \to Z'_{F_\xi}$ that carry the base points of each $Z_{F_\xi}$ to the base points of
	$Z'_{F_\xi}$.  The $j_\xi$ are compatible, by uniqueness.  Hence they
define an isomorphism of $G$-torsor patching problems; and by \cite[Theorem~7.1]{HH:FP}, this isomorphism is induced by a unique $G$-torsor isomorphism $j:Z \to Z'$.  Necessarily, $j$ takes the base points of $Z$ to those of $Z'$, since $j_\xi$ takes the base points of $Z_{F_\xi}$ to those of
	$Z'_{F_\xi}$, and since in each case the geometric points are in bijection with $G$.  So $(Z,\{i_s\}_{s \in S})$, $(Z',\{i_s'\}_{s \in S})$ are isomorphic as multipointed $G$-torsors.
\end{proof}

\subsection{Van Kampen's theorem for general reduction graphs} \label{genl van Kampen subsec}

In this section we prove variants on Theorem~\ref{diamond vK} in which the sets $\mc P, \mc U, \mc B$ in Notation~\ref{geom notn} can consist of more than one element, and so the configuration of fields need not be a diamond.  The simplest generalization would assert that $\Gal(F)$ is the direct limit of the system of absolute Galois groups $\Gal(F_\xi)$ for $\xi \in \mc P \cup \mc U \cup \mc B$, with respect to homomorphisms $\Gal(F_\wp) \to \Gal(F_P)$ and $\Gal(F_\wp) \to \Gal(F_U)$ whenever $\wp$ is a branch at $P$ on $U$.  Here the absolute Galois groups are taken with respect to a suitable choice of separable closures $F^\sep$ of $F$ and $F_\xi^\sep$ of $F_\xi$ for $\xi \in \mc P \cup \mc U \cup \mc B$.  For $\Gal(F)$ to be the direct limit, we would need homomorphisms $\Gal(F_\xi) \to \Gal(F)$ for all $\xi \in \mc P \cup \mc U \cup \mc B$ such that the compositions $\Gal(F_\wp) \to \Gal(F_P) \to \Gal(F)$ and $\Gal(F_\wp) \to \Gal(F_U) \to \Gal(F)$ agree.

By the discussion at the 
beginning of Section~\ref{Gal gps sec}, such group homomorphisms would be induced by  
a choice of separable closures $F^\sep, F_\xi^\sep, F_\wp^\sep$ for $\xi \in \mc P \cup \mc U$, $\wp \in \mc B$, such that $F^\sep$ is the separable closure of $F$ in $F_\xi^\sep$ for every $\xi \in \mc P \cup \mc U \cup \mc B$, and $F_\xi^\sep$ is the separable closure of $F_\xi$ in $F_\wp^\sep$ for every branch $\wp \in \mc B$ on (or at) $\xi \in \mc P \cup \mc U$.  We call this a {\em compatible system of separable closures}; and it then makes sense to ask whether the van Kampen assertion holds.

The next result provides a necessary and sufficient condition for such a generalized van Kampen theorem to hold, in terms of the reduction graph associated to the sets $\mc P, \mc U, \mc B$.  
(As in \cite[Section~2.1.1]{HHK:Hi}, the {\em reduction graph} associated to these sets is the connected bipartite graph whose vertices are the elements of $\mc P \cup \mc U$ and whose edges are the elements of $\mc B$, where an edge $\wp$ connects two vertices $P,U$ if $\wp$ is a branch at $P$ lying on $U$.)

\begin{thm}  \label{dir lim patches thm}
Let ${\mc X}$ be a normal model for a semi-global field $F$, and let $\mc P,\mc U, \mc B$ be as in Notation~\ref{geom notn}. 
Then the following conditions are equivalent: 
\begin{enumerate}
\renewcommand{\theenumi}{\roman{enumi}}
\renewcommand{\labelenumi}{(\roman{enumi})}
\item \label{tree item}
The associated reduction graph is a tree.
\item \label{compat system item}
There is a compatible system of separable closures of the fields $F$ and $F_\xi$ for 
$\xi \in \mc P,\mc U, \mc B$.
\item \label{dir lim genl item}
$\Gal(F)$ is the direct limit of the groups $\Gal(F_\xi)$ for $\xi \in \mc P \cup \mc U \cup \mc B$ with respect to some compatible system of separable closures.
\end{enumerate}
Under these equivalent conditions, 
$\Gal(F)$ is the direct limit of the groups $\Gal(F_\xi)$ for $\xi \in \mc P \cup \mc U \cup \mc B$ with respect to \textbf{any} given compatible system of separable closures.
\end{thm}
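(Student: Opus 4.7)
My plan is to establish the chain (i) $\Rightarrow$ (ii) $\Rightarrow$ (iii) for any given compatible system, and (iii) $\Rightarrow$ (i); since (iii) trivially implies (ii), this yields the full equivalence. The final ``any compatible system'' assertion will come out of the proof of (ii) $\Rightarrow$ (iii), which I intend to carry out for an arbitrary compatible system rather than a specifically constructed one.

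For (i) $\Rightarrow$ (ii), I would build a compatible system by induction on the tree. Root the graph at a vertex $\xi_0 \in \mc P \cup \mc U$ and choose any separable closure $F_{\xi_0}^\sep$; let $F^\sep$ be the separable closure of $F$ inside it. Traverse the tree outward: for each edge $\wp$ joining a processed vertex $\xi$ to an unprocessed $\xi'$, choose a separable closure $F_\wp^\sep \supseteq F_\xi^\sep$ of $F_\wp$ in which $F_\xi^\sep$ is the separable closure of $F_\xi$, and let $F_{\xi'}^\sep$ be the separable closure of $F_{\xi'}$ inside $F_\wp^\sep$. Because the graph is a tree, every vertex is reached along a unique path, so no cycle forces inconsistent choices among the embeddings.

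For (ii) $\Rightarrow$ (iii), given any compatible system the inclusions of the system yield commuting Galois homomorphisms and hence a natural map $\Phi\colon \varinjlim \Gal(F_\xi) \to \Gal(F)$. To show $\Phi$ is an isomorphism I would test against each finite group $G$ and verify that the induced map $\Hom(\Gal(F),G) \to \varprojlim \Hom(\Gal(F_\xi),G)$ is bijective. The left side parameterizes isomorphism classes of geometrically pointed $G$-torsors over $F$; by Proposition~\ref{pointed patching} these are in bijection with the fiber product of isomorphism classes of $\ms S_\xi$-multipointed torsors over the $F_\xi$. The compatible system collapses each $F_\xi^\sep(\wp)$ to the fixed $F_\xi^\sep$, canonically promoting singly pointed torsors at each vertex to multipointed ones. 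Under the tree hypothesis, rooting the graph and propagating a base point outward along the unique paths in the tree converts any multipointed compatible family into a singly pointed one, showing that the multipointed and singly pointed fiber products coincide and giving the desired direct-limit description.

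For (iii) $\Rightarrow$ (i), I would argue by contrapositive. If the first Betti number of the reduction graph is $g > 0$, pick a spanning tree and an extra edge $\wp$ closing a cycle; a Bass--Serre style analysis then produces a finite group $G$ and a compatible family of singly pointed $G$-torsors along the vertices exhibiting non-trivial monodromy around that cycle. This family glues to a multipointed $G$-torsor over $F$ via Proposition~\ref{pointed patching} but not to any singly pointed one, so $\Phi$ cannot be bijective, contradicting (iii); hence $g = 0$ and the graph is a tree. The main obstacle I anticipate is making the tree-based propagation in (ii) $\Rightarrow$ (iii) fully rigorous, together with the dual cycle obstruction for (iii) $\Rightarrow$ (i); both are Bass--Serre-theoretic rigidification arguments carried out inside the patching framework of Proposition~\ref{pointed patching}.
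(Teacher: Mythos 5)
Your overall architecture ((i) $\Rightarrow$ (ii) $\Rightarrow$ (iii) together with (iii) $\Rightarrow$ (i)) matches the paper's, and your inductive construction for (i) $\Rightarrow$ (ii) and your reduction of (ii) $\Rightarrow$ (iii) to Proposition~\ref{pointed patching} are essentially the paper's argument. However, there are two problems. The first is a logical wrinkle in (ii) $\Rightarrow$ (iii): you invoke the tree hypothesis (condition (\ref{tree item})) inside a step whose only hypothesis is (\ref{compat system item}), and since you never independently show (\ref{compat system item}) $\Rightarrow$ (\ref{tree item}), condition (\ref{compat system item}) is left dangling in the equivalence. The fix is that the tree structure is not needed there: once the compatible system collapses each $F_\xi^\sep(\wp)$ to $F_\xi^\sep$, the $F_\xi^\sep$-point of $Z_{F_\xi}$ attached to a branch $\wp$ is determined by the restriction $i_\wp\colon E \to F^\sep$, so requiring the points at a vertex to coincide forces $i_\wp = i_{\wp'}$ for branches meeting at a common vertex, and mere \emph{connectedness} of the reduction graph then forces all the $i_\wp$ to agree. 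This is exactly how the paper restricts the equivalence of Proposition~\ref{pointed patching} to the geometrically pointed subcategories.

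The second problem is more serious: your (iii) $\Rightarrow$ (i) targets the wrong failure mode. You propose a compatible family of singly pointed torsors with ``non-trivial monodromy'' that glues to a multipointed torsor over $F$ but to no singly pointed one, i.e., a failure of surjectivity of $\Hom(\Gal(F),G)\to\varprojlim\Hom(\Gal(F_\xi),G)$. No such family can exist: by the observation above, any object of the singly-pointed fiber product patches via Proposition~\ref{pointed patching} to a multipointed torsor over $F$ whose distinguished points automatically all coincide, so the map is surjective whenever a compatible system is given. There is no room to record monodromy in the fiber product of isomorphism classes of pointed torsors, because pointed torsors admit at most one morphism between any two of them. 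What actually fails when the reduction graph has a cycle is \emph{injectivity}: the cycle yields a nontrivial connected split cover $\mc Y \to \mc X$ (this is \cite[Proposition~6.2]{HHK:H1}, a genuine geometric input that your sketch does not supply), whose function field $E/F$ is a nontrivial extension inducing the trivial \'etale algebra over every $F_P$ and $F_U$ by \cite[Corollary~5.5]{HHK:H1}; the corresponding nontrivial homomorphism $\Gal(F)\to G$ and the trivial one then have the same image in $\varprojlim\Hom(\Gal(F_\xi),G)$, contradicting the universal property. Your intuition about monodromy around the cycle is correct, but it manifests as extra, locally indistinguishable gluings of \emph{unpointed} torsors (non-injectivity), not as an obstruction to gluing.
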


\begin{proof}
Throughout this proof, the letter $\Gamma$ will denote the associated reduction graph.
To show that
(\ref{tree item}) implies (\ref{compat system item}), we construct these separable closures inductively, using that the reduction graph $\Gamma$ is a tree.  
Namely, consider a subtree $\mc T$ of $\Gamma$, let $\xi_0 \in \mc P \cup \mc U$ be a terminal vertex of $\mc T$, and let $\wp_0 \in \mc B$ be the edge connecting $\xi_0$ to the rest of $\mc T$, which we call $\mc T'$.  Suppose that we have compatible separable closures $F_\xi^\sep$ associated to the vertices and edges $\xi$ of $\mc T'$.  Let $\xi_1 \in \mc P \cup \mc U$ be the other vertex of the edge $\wp_0$; this is a vertex of $\mc T'$.  Choose a separable closure $F_{\wp_0}^\sep$ of $F_{\wp_0}$ that contains 
$F_{\xi_1}^\sep$.  (For example, take any separable closure of the field $(F_{\wp_0} \otimes_{F_{\xi_1}} F_{\xi_1}^\sep)/{\frak m}$, where $\frak m$ is a maximal ideal.)  Then
take $F_{\xi_0}^\sep$ to be the separable closure of $F_{\xi_0}$ in $F_{\wp_0}^\sep$.  
Since $F_{\xi_1}^\sep$ is the separable closure of $F_{\xi_1}$ in $F_{\wp_0}^\sep$,
it follows that the separable closure of $F$ in $F_{\wp_0}^\sep$ is the same as the separable closure of $F$ in $F_{\xi_1}^\sep$.  But the latter is the same as the separable closure of $F$ in $F_\wp^\sep$, where $\wp \in \mc B$ is any branch at (or on) $\xi_1$, and that field is $F^\sep$.  So this system of separable closures on the vertices and edges of $\mc T$ is compatible, thus completing the induction, and showing that (\ref{tree item}) implies (\ref{compat system item}).

Next, we show that with respect to any given compatible system of separable closures $F^\sep$ and $F_\xi^\sep$, $\Gal(F)$ is the direct limit of the groups $\Gal(F_\xi)$.  This will show that  
(\ref{compat system item}) implies both (\ref{dir lim genl item}) and the stronger condition in the last part of the assertion.  For this, note that given this compatible system, the construction preceding Proposition~\ref{pointed patching} yields indexed sets $\ms S_P, \ms S_U, \ms S$ of separable closures of $F_P, F_U, F$ respectively, for $P \in \mc P$ and $U \in \mc U$.  
For any given $\xi \in \mc P \cup \mc U$, the fields $F_\xi^\sep(\wp)$ in the indexed set $\ms S_\xi$ are each just the field $F_\xi^\sep$ of the previous paragraph; and similarly all of the fields in the indexed set $\ms S$ are just the above field $F^\sep$.  
A geometrically pointed $G$-torsor over $F$ (with respect to $F^\sep$) is the same as an $\ms S$-pointed $G$-torsor $(Z,\{Q_s\}_{s \in S})$ such that the morphisms $Q_s:\Spec(F^\sep(\wp))\to Z$ are all the same morphism $\Spec(F^\sep) \to Z$.  The corresponding statements hold for each $F_P, F_U, F_\wp$. 
Let $\bar{\ms T}_G^{\ms S}(F)$ denote the full subcategory of $\ms T_G^{\ms S}(F)$ whose objects are the geometrically pointed $G$-torsors over $F$ under the above identification. Similarly, we define full subcategories $\bar{\ms T}_G^{\mc U}$, $\bar{\ms T}_G^{\mc P}$, and $\bar{\ms T}_G^{\mc B}$ of the product categories ${\ms T}_G^{\mc U}$, ${\ms T}_G^{\mc P}$, and ${\ms T}_G^{\mc B}$ defined in the discussion leading up to Proposition~\ref{pointed patching}.  Under the equivalence of categories given in Proposition~\ref{pointed patching}, an object in $\ms T_G^{\ms S}(F)$ is sent to an object in
$\bar{\ms T}_G^{\mc U} \times_{\bar{\ms T}_G^{\mc B}} \bar{\ms T}_G^{\mc P}$
if and only if it is an object in $\bar{\ms T}_G^{\ms S}(F)$, since the reduction graph is connected.  Thus the equivalence of categories in Proposition~\ref{pointed patching} restricts to an equivalence $\bar{\ms T}_G^{\ms S}(F) \to \bar{\ms T}_G^{\mc U} \times_{\bar{\ms T}_G^{\mc B}} \bar{\ms T}_G^{\mc P}$.  As in Proposition~\ref{pointed patching}, isomorphism classes of objects in
$\bar{\ms T}_G^{\mc U} \times_{\bar{\ms T}_G^{\mc B}} \bar{\ms T}_G^{\mc P}$ are in natural bijection with the (1-)fiber product of isomorphism classes of objects in the respective categories.  So the desired assertion now follows from the natural bijection between $\Hom(\Gal(F),G)$ and isomorphism classes of geometrically pointed $G$-torsors.  

It remains to show that (\ref{dir lim genl item}) implies (\ref{tree item}). 
Suppose to the contrary that the reduction graph $\Gamma$ is not a tree.  We claim that that there is a non-trivial finite Galois field extension $E/F$ with Galois group~$G$ that induces the trivial extension over each $F_P$ and each $F_U$, for $P \in \mc P$ and $U \in \mc U$.  Once this is shown, the corresponding non-trivial map $\Gal(F) \to G$ induces the trivial maps $\Gal(F_\xi) \to G$ for $\xi \in \mc P \cup \mc U$.  But those trivial maps are also induced by the trivial map $\Gal(F) \to G$.  So $\Gal(F)$ does not have the universal property for direct limits.  This shows that it suffices to prove the claim.

To do this, first assume that the set $\mc P$ contains all the points where the closed fiber $X$ is not unibranched.  By~\cite[Proposition~6.2]{HHK:H1}, since $\Gamma$ is not a tree (and 
thus has a non-trivial covering space), there exists a non-trivial 
finite connected split cover $\mc Y \to \mc X$.  Let $E/F$ be the 
corresponding finite separable field extension. By~\cite[Corollary~5.5]{HHK:H1}, this split cover induces trivial extensions of each $F_\xi$, for $\xi \in \mc P \cup \mc U$; hence $E/F$  satisfies the conditions of the claim.

If we do not make the above assumption on the set $\mc P$, then the proof of~\cite[Proposition~6.2]{HHK:H1}
still shows that the finite connected covering spaces of $\Gamma$ are in bijection with the split covers of $\mc X$ that induce trivial extensions of each $F_\xi$.  So again, since $\Gamma$ has non-trivial covering spaces, there exists a non-trivial finite connected split cover $\mc Y \to \mc X$ that is trivial over each $F_\xi$; and the corresponding field extension $E/F$ again satisfies the conditions of the claim.
\end{proof}

Thus, in the context of Notation~\ref{geom notn}, the absolute Galois group of $F$ is the direct limit of the absolute Galois groups of the fields $F_P,F_U,F_\wp$ with respect to a compatible system of separable closures if the reduction graph is a tree, but not otherwise.  
We now consider the case where the reduction graph is not a tree, and state a van Kampen theorem in terms of groupoids rather than groups, to avoid this limitation.  This assertion parallels a result in topology that generalizes the usual van Kampen theorem to the case in which the intersection is allowed to be disconnected, doing so in terms of groupoids (see \cite{Brown}, Section~6.7).  Our approach here is also motivated by the use of fundamental groupoids of schemes in \cite[V.7]{SGA1}; there, as in the topological context, one uses a collection of base points, rather than just one point.  In this way we can avoid the problem that in general there is no
compatible system of separable closures (cf.~Theorem~\ref{dir lim patches thm}).

Recall that a {\em groupoid} is a category in which every homomorphism is an isomorphism.  Groups can be viewed as groupoids, by associating to each group $G$ the groupoid $BG$ consisting of one object, and with the morphisms corresponding to the elements of $G$.  If $L$ is a field, and $\ms S = \{L_s\}_{s \in S}$ is a nonempty indexed set of separable closures of $L$, then we may consider the 
{\em absolute Galois groupoid} $\pi_1(L,\ms S)$ of $L$ with respect to $\ms S$.  Its objects are the elements of $S$, and its morphisms are isomorphisms between the corresponding separable closures of $L$.  
(Here $\pi_1(L,\ms S)$ can be viewed as the fundamental groupoid $\pi_1(\Spec(L),\ms S)$ of $\Spec(L)$ with respect to the indexed set of base points $\ms S$; cf.\ \cite[V.7]{SGA1} and \cite[Section~6.7]{Brown}.)
This groupoid is small (i.e., is a small category) since $S$ is a set.

If $L^\sep$ is a separable closure of a field $L$, and we write 
$\Gal(L) = \Gal(L^\sep/L)$, then for each finite group $G$ we have 
a natural bijection between the objects of 
$\Hom(B\!\Gal(L),BG)$ and the elements of $\Hom(\Gal(L),G)$.  
As discussed at the beginning of Section~\ref{van Kampen subsec}, 
this latter set is in natural bijection with the set of geometrically pointed $G$-torsors over $L$; i.e., the set of 
isomorphism classes of pairs $(Z,i)$, where $Z = \Spec(E)$ is a $G$-torsor over $L$ and $i:E \to L^\sep$ is an $L$-algebra map, and where $i$ corresponds to a choice of a distinguished   
$L^\sep$-point on $Z$.  This bijection can be extended to the context of groupoids, 
using multipointed torsors 
(with notation as in the discussion leading up to Proposition~\ref{pointed patching}):

\begin{lemma} \label{pointed torsors}
Let $L$ be a field, $\ms S = \{L_s\}_{s \in S}$ an indexed set of separable closures of $L$,
and $G$ a finite group.  
Then the set $\Hom(\pi_1(L,\ms S),BG)$ is in natural
bijection with the set of isomorphism classes of $\ms S$-multipointed $G$-torsors
$(Z,\{i_s\}_{s \in S})$ in $\ms T_G^{\ms S}(L)$.
\end{lemma}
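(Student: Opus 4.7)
The plan is to define a natural map from isomorphism classes of $\ms S$-multipointed $G$-torsors to functors $\pi_1(L,\ms S)\to BG$, and then show it is a bijection by fixing an auxiliary base object $s_0\in S$ and reducing to the single-point case recalled at the start of Section~\ref{van Kampen subsec}.

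For the forward direction, each $L_s$ is separably closed, so $Z(L_s)$ carries the structure of a right $G$-torsor for every $s$. For any morphism $\sigma:L_s\to L_{s'}$ in $\pi_1(L,\ms S)$, postcomposition with $\sigma$ sends $i_s\in Z(L_s)$ to an $L_{s'}$-point of $Z$; I would define $\Phi(\sigma)\in G$ to be the unique element satisfying $\sigma\circ i_s=i_{s'}\cdot \Phi(\sigma)$, and send every object to the unique object of $BG$. Functoriality $\Phi(\tau\circ\sigma)=\Phi(\tau)\cdot\Phi(\sigma)$ follows from the identity $\tau\circ(\sigma\circ i_s)=(\tau\circ\sigma)\circ i_s$ together with the compatibility of the right $G$-action on geometric points with left composition by $L$-algebra maps. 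The assignment is well-defined on isomorphism classes because any isomorphism $f:Z\to Z'$ of multipointed torsors preserves base points and is $G$-equivariant, so the defining relation for $\Phi$ transfers unchanged.

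For bijectivity, I would fix $s_0\in S$, write $L^\sep:=L_{s_0}$ and $\Gamma:=\Gal(L^\sep/L)$, and choose $L$-algebra isomorphisms $\phi_s:L^\sep\to L_s$ with $\phi_{s_0}=\id$. Since $\pi_1(L,\ms S)$ is connected, every morphism in it factors uniquely as $\phi_{s'}\circ\alpha\circ\phi_s^{-1}$ for some $\alpha\in\Gamma$, so a functor $\Phi:\pi_1(L,\ms S)\to BG$ is specified by the pair consisting of the homomorphism $\rho:=\Phi|_{\Gamma}:\Gamma\to G$ and the family of elements $g_s:=\Phi(\phi_s)\in G$ for $s\neq s_0$. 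On the torsor side, the single-point correspondence identifies isomorphism classes of geometrically pointed $G$-torsors $(Z,i_{s_0})$ with $\Hom(\Gamma,G)$; and once such a torsor is fixed, each additional base point is uniquely encoded by writing $i_s=(\phi_s\circ i_{s_0})\cdot g_s^{-1}$ for a unique $g_s\in G$, since $Z(L_s)$ is a $G$-torsor. Both sides are therefore parametrized by $\Hom(\Gamma,G)\times G^{S\smallsetminus\{s_0\}}$.

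The bulk of the work is then a bookkeeping check that my map from the second paragraph sends $(Z,\{i_s\})$ to a functor with the same parametrizing data $(\rho,(g_s))$. This amounts to unwinding $\sigma\circ i_s=i_{s'}\cdot\Phi(\sigma)$ in the two basic cases $\sigma=\phi_s$ and $\sigma\in\Gamma$, which should be a few lines of direct calculation. I expect keeping the left/right conventions for the $G$-action and for composition of ring maps consistent to be the main (mild) obstacle. Because $\ms T_G^{\ms S}(L)$ is a setoid, matching the parametrizations in this way suffices to conclude that my map is a bijection on isomorphism classes; and because $\Phi$ was defined intrinsically in terms of $(Z,\{i_s\})$, independence of the final bijection from the auxiliary choices of $s_0$ and $\phi_s$ is automatic.
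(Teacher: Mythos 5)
Your proposal is correct and follows essentially the same route as the paper: the forward map is defined intrinsically by the same relation $\sigma \circ i_s = i_{s'}\cdot\Phi(\sigma)$, and bijectivity is reduced to the single-point case by fixing a base index $s_0$ and connecting isomorphisms. The only (cosmetic) difference is that the paper constructs an explicit inverse by assembling the torsors attached to the restrictions $f|_{\Gal(L_s/L)}$ and then checks independence of choices, whereas you match parametrizations of both sides by $\Hom(\Gal(L_{s_0}/L),G)\times G^{S\smallsetminus\{s_0\}}$, which makes that final independence check automatic.
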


\begin{proof}
First, consider the isomorphism class of $(Z,\{i_s\}_{s \in S})$ as above, where $Z = \Spec(E)$,
For each $s \in S$, let $\zeta_s \in Z(L_s)$ be the $L_s$-point corresponding to $i_s: E
\to L_s$.  
For $s \in S$, the restricted multipointed torsor $(Z,\{i_s\})$ defines an element
$f_s \in \Hom(\Gal(L_s/L),G) = \Hom(\pi_1(L,\{L_s\}),BG)$.  Given $s,s' \in S$,
each $L$-algebra isomorphism $\alpha:L_{s'} \to L_s$ induces
a bijection $Z(\alpha):Z(L_{s'}) \to Z(L_s)$;
and there is a unique $g_\alpha \in G$ such that $Z(\alpha)(\zeta_{s'}) = \zeta_s \cdot g_\alpha$.
Note that $g_\alpha = f_s(\alpha)$ if $s'=s$.
It is then straightforward to check that there is a morphism $f \in \Hom(\pi_1(L,\ms S),BG)$ given by $f(\alpha)=g_\alpha$ as above for all $s,s'$ and $\alpha:L_{s'} \to L_s$; and that for $s \in S$, the restriction of $f$ to $\Gal(L_s/L)$ is $f_s$.
This defines one direction of the bijection.

For the opposite direction, we begin by
picking some $s_0 \in S$; and for every $s \in S$ we pick an $L$-algebra isomorphism $\alpha_s:L_{s_0} \to L_s$, with $\alpha_{s_0}$ being the identity automorphism of $L_{s_0}$.  
This induces a conjugation map $c_{\alpha_s}:\Gal(L_{s_0}/L) \to \Gal(L_s/L)$,
sending $\sigma$ to $\alpha_s \sigma \alpha_s^{-1}$.
Say $f \in \Hom(\pi_1(L,\ms S),BG)$.  Then for every $s \in S$, $f$ restricts to an element $f_s \in \Hom(\pi_1(L,\{L_s\}),BG) = \Hom(\Gal(L_s/L),G)$, corresponding to the isomorphism class of a $G$-torsor $Z_s = \Spec(E_s)$ over $L$ together with
an $L$-homomorphism $i_s:E_s \to L_s$; here $i_s$ corresponds to
an $L_s$-point $\zeta_s$ on $Z_s$.
Write $Z=Z_{s_0}$, $E=E_{s_0}$, and $\zeta = \zeta_{s_0}$, and for each $s$ let
$Z(\alpha_s):Z(L_{s_0}) \to Z(L_s)$ be the map induced by $\alpha_s$.
Since $f$ is a morphism,
the two maps $f_{s_0},f_s c_{\alpha_s} \in \Hom(\pi_1(L,\{L_{s_0}\}),BG) = \Hom(\Gal(L_{s_0}/L),G)$
differ by conjugation by $f(\alpha_s) \in G$.  So there is a unique isomorphism $\alpha_{s*}:Z \to Z_s$ of $G$-torsors over $L$ that carries $Z(\alpha_s)(\zeta) \in Z(L_s)$ to $\zeta_s \cdot f(\alpha_s) \in Z_s(L_s)$.
We then obtain a multipointed torsor $(Z,\{i_s\}_{s \in S})$,
where $i_s:E \to L_s$ is the homomorphism corresponding to the $L_s$-point
$\alpha_{s*}^{-1}(\zeta_s) =
Z(\alpha_s)(\zeta) \cdot f(\alpha_s)^{-1}$ on $Z$.
In this way, for each $f$ we obtain the isomorphism class of a multipointed
torsor $(Z,\{i_s\}_{s \in S})$.
It is straightforward to check that this association
is independent of the choices of $s_0$ and $\alpha_s$, and is
inverse to the one in the previous paragraph.
\end{proof}

Note that since there is at most one
morphism between any two objects of $\ms T_G^{\ms S}(L)$,
the above lemma yields an equivalence of categories $\Hom(\pi_1(L,\ms S),BG) \cong \ms T_G^{\ms S}(L)$, 
if we regard the set $\Hom(\pi_1(L,\ms S),BG)$ as a category
with all arrows being identities.

In the context of the discussion leading up to
Proposition~\ref{pointed patching}, we may take the disjoint union groupoid
$\coprod_{P \in \mc P} \pi_1(F_P,\ms S_P)$, whose objects and morphisms are the disjoint unions of the objects and morphisms of the groupoids $\pi_1(F_P,\ms S_P)$, for $P \in \mc P$.  Similarly we may take 
$\coprod_{U \in \mc U} \pi_1(F_U,\ms S_U)$ and $\coprod_{\wp \in \mc B} \pi_1(F_\wp,\ms S_\wp)$.  We then 
have a commutative diagram of groupoids, which generalizes diagram~(\ref{Gal diamond}) in Section~\ref{van Kampen subsec}, and in which the arrows induce bijections on the (finite) sets of objects of the four categories:
\begin{equation} \label{groupoid diamond diag}
\begin{gathered}
\xymatrix @R=.7cm @C=.1cm  {
& \coprod \pi_1(F_\wp,\ms S_\wp) \ar[ld] \ar[rd] &\\
\coprod \pi_1(F_P,\ms S_P) \ar[rd] && \coprod \pi_1(F_U,\ms S_U) \ar[ld]\\
& \pi_1(F,\ms S) &
}
\end{gathered}
\end{equation}
Here, the commutativity assertion is that the two vertical compositions give the same (not just equivalent) maps on objects, and on morphisms.

We now obtain a van Kampen-type theorem in terms of groupoids, which generalizes Theorem~\ref{diamond vK}, and
parallels the topological van Kampen result \cite[6.7.2]{Brown} for groupoids:

\begin{thm} \label{general vK}
The above diamond is a pushout diagram of small groupoids, in the sense that for every small groupoid $\mc G$,
the natural map of sets
\[\Hom(\pi_1(F,\ms S),\mc G) \to  \Hom(\coprod \pi_1(F_P,\ms S_P),\mc G) \times_{\Hom(\coprod \pi_1(F_\wp,\ms S_\wp),\mc G)}  \Hom(\coprod \pi_1(F_U,\ms S_U),\mc G)\]
is a bijection.  For any element of $S$, corresponding to a separable closure $F^\sep$ of $F$, the absolute Galois group $\Gal(F^\sep/F)$ of $F$ is the 
automorphism group of that object in this groupoid.
\end{thm}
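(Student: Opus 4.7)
The second assertion is immediate: in the groupoid $\pi_1(F,\ms S)$, the automorphism group of any object $F^\sep$ is by construction the group of $F$-algebra automorphisms of $F^\sep$, which is $\Gal(F^\sep/F)$.

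For the pushout assertion, my plan is to reduce to the case $\mc G = BG$ with $G$ a finite group, and then combine Lemma~\ref{pointed torsors} with Proposition~\ref{pointed patching}. The reduction uses that $\pi_1(F,\ms S)$ is a connected groupoid (any two separable closures of $F$ are $F$-isomorphic), so every functor from it to $\mc G$ has image in a single connected component of $\mc G$, which is equivalent to $BH$ for some group $H$; the Galois-theoretic nature of the source then lets one pass to a finite quotient of $H$. In the resulting case $\mc G = BG$ with $G$ finite, applying Lemma~\ref{pointed torsors} to each of the fields $F, F_P, F_U, F_\wp$ with their respective indexed sets of separable closures $\ms S, \ms S_P, \ms S_U, \ms S_\wp$ identifies each of the four $\Hom$-sets in the asserted bijection with the set of isomorphism classes of multipointed $G$-torsors over the corresponding field. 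The required bijection then becomes exactly the set-level consequence of Proposition~\ref{pointed patching}: isomorphism classes of $\ms S$-multipointed $G$-torsors over $F$ form the $1$-fiber product of the isomorphism classes of multipointed $G$-torsors over the local fields $F_P, F_U, F_\wp$.

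The principal obstacle is the categorical bookkeeping in the reduction to the finite-$BG$ case, verifying that the pushout property against every $BG$ (for $G$ finite) upgrades to the pushout property against an arbitrary small groupoid $\mc G$, with careful attention to the equivalences of groupoids involved. Once this reduction is handled, the remainder of the argument is a direct translation via the groupoid-torsor dictionary of Lemma~\ref{pointed torsors}, with Proposition~\ref{pointed patching} supplying the substantive patching step.
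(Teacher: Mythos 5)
Your proposal is correct and follows essentially the same route as the paper: reduce first to a connected target, then to a one-object groupoid $BG$, then to finite $G$ via profiniteness, and finally translate through Lemma~\ref{pointed torsors} into multipointed torsors so that Proposition~\ref{pointed patching} supplies the bijection. The ``categorical bookkeeping'' you flag is handled in the paper by choosing isomorphisms $j_t:t_0\to t$ to build an explicit retraction $J:\mc G\to BG$ and then exhibiting the unique lift of a compatible pair, which is exactly the verification your outline defers.
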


\begin{proof} 
The last assertion is immediate from the main assertion.  The proof of the main assertion 
begins with several reduction steps.

First note that the category $\pi_1(F,\ms S)$ is connected (viz.\ there is a morphism between each pair of objects in this category), because any two separable closures of a field $L$ are $L$-isomorphic.  So we may assume that $\mc G$ is connected, by treating each connected component separately.

Second, we reduce to the case that $\mc G$ has just one object, i.e., it is of the form $BG$ for some group $G$.  Pick an object $t_0$ in $\mc G$, and for every object $t$ in $\mc G$ pick an isomorphism $j_t:t_0 \to t$, with $j_{t_0}$ being the identity on $t_0$.  Let $G = \Aut(t_0)$, so that $BG$ is the full subcategory of $\mc G$ whose unique object is $t_0$.  Define a functor 
$J:\mc G \to BG$ by taking every object in $\mc G$ to the object $t_0$ of $BG$, and taking every morphism 
$\alpha \in \Hom(t,t')$ in $\mc G$ to the morphism $j_{t'}^{-1}\alpha j_t \in \End(t_0)$ in $BG$.
Now given an element $(\phi_{\mc P},\phi_{\mc U})$ in the right hand side of the above map of sets, by composing with $J$ we obtain an element $(J\phi_{\mc P},J\phi_{\mc U})$ in
\[\Hom(\coprod \pi_1(F_P,\ms S_P),BG) \times_{\Hom(\coprod \pi_1(F_\wp,\ms S_\wp),BG)} \Hom(\coprod \pi_1(F_U,\ms S_U),BG).\]
Once we prove the result for maps to groupoids that have just one object, we have that there is a unique $\til \phi \in \Hom(\pi_1(F,\ms S),BG)$ that induces $(J\phi_{\mc P},J\phi_{\mc U})$.  
Define the functor
$\phi:\pi_1(F,\ms S)\to\mc G$ by taking each object
$\wp \in \mc B = \Obj(\pi_1(F,\ms S))$ to
$\phi_{\mc P}(\wp) = \phi_{\mc U}(\wp)$, for $\wp$ a branch at $P$ on $U$ (so that $\wp$ is also an object in $\pi_1(F_P,\ms S_P)$
and in $\pi_1(F_U,\ms S_U)$, which allows us to apply $\phi_{\mc P}$ and $\phi_{\mc U}$); and taking each morphism $\alpha:\wp \to \wp'$ (i.e., each $F$-algebra isomorphism $\alpha:F^\sep(\wp) \to F^\sep(\wp')$) to
$j_{\phi(\wp')} \til \phi(\alpha) j_{\phi(\wp)}^{-1}:
\phi(\wp)\to\phi(\wp')$.
Then $\phi$ is the unique element in $\Hom(\pi_1(F,\ms S),\mc G)$ that maps to 
$(\phi_{\mc P},\phi_{\mc U})$.  This establishes the desired bijection and completes this reduction step.

Third, since the set of objects in each of the groupoids in the diamond is finite, and since the automorphism group of each object is profinite, it suffices to prove the result in the case that $\mc G = BG$ for $G$ a finite group.  We now assume that we are in that case.

By Lemma~\ref{pointed torsors}, 
we may identify
\[\Hom(\coprod \pi_1(F_P,\ms S_P),BG) = \prod \Hom(\pi_1(F_P, \ms S_P),BG)\]
with the set of isomorphism classes of objects in $\ms T_G^{\mc P}$, and similarly for $\mc B, \mc U$; and we may identify $\Hom(\pi_1(F, \ms S), BG)$ with the set of isomomorphism classes in $\ms T_G^{\ms S}(F)$. The result therefore follows from Proposition~\ref{pointed patching}.
\end{proof}

Following \cite{Stix}, we can also describe the absolute Galois group $\Gal(F^\sep/F)$ of $F$ more explicitly, by making a choice of maximal tree $\mc T$ in the reduction graph $\Gamma$ of $(\mc X,\mc P)$.  The vertices of $\mc T$ are the same as those of $\Gamma$, and are indexed by $\mc P \cup \mc U$.  For any two vertices $v_1,v_2$ of $\Gamma$, there is a unique minimal path in $\mc T$ from $v_1$ to $v_2$, and this provides an isomorphism between the fundamental groups $\pi_1(\Gamma,v_i)$ for $i=1,2$.  These groups can also be identified with the fundamental group of $\Gamma$ with respect to $\mc T$ as a ``base point''; or equivalently, the fundamental group of the graph $\Gamma/\mc T$ obtained from $\Gamma$ by contracting $\mc T$ to a single vertex.  The graph $\Gamma/\mc T$ has just one vertex, and its edges are in bijection with the edges of $\Gamma$ that do not lie in $\mc T$.  This fundamental group is thus free of finite rank, with generators $e_\wp$ indexed by those branches $\wp \in \mc B$ that correspond to the edges of $\Gamma/\mc T$.  In this situation, Corollary~3.3 of \cite{Stix} gives:

\begin{prop} \label{free product quotient}
Let $F$ be a semi-global field, and let $\mc P$, $\mc U$, $\mc B$ be as in Notation~\ref{geom notn}. For each $\xi \in \mc P \cup \mc U$ and a branch $\wp \in \mc B$ at $\xi$, choose an inclusion $\bar j_{\xi,\wp}:F_\xi^\sep \hookrightarrow F_\wp^\sep$ extending the given inclusions $j_{\xi,\wp}:F_\xi \subset F_\wp$, and inducing homomorphisms $\alpha_{\wp,\xi}:\Gal(F_\wp^\sep/F_\wp)  =: \Gal(F_\wp) \to \Gal(F_\xi^\sep/F_\xi) =: \Gal(F_\xi)$.  Choose a maximal tree $\mc T$ in the reduction graph $\Gamma$; thus the profinite completion $\wh\pi_1(\Gamma,\mc T)$ is the free 
profinite group with generators $e_\wp$ indexed by the edges of $\Gamma$ that do not lie in $\mc T$.  
Let $e_\wp = 1 \in \wh\pi_1(\Gamma,\mc T)$ for each $\wp \in \mc B$ that is an edge of $\mc T$.
Then the absolute Galois group of $F$ is isomorphic to the quotient of the free product
$\Asterisk_{\xi \in \mc P \cup \mc U} \Gal(F_\xi) \ast \wh\pi_1(\Gamma,\mc T)$ by the relations  
$\alpha_{\wp,U}(g)=e_\wp \alpha_{\wp,P}(g) e_\wp^{-1}$ for all triples $P,U,\wp$ where $\wp \in \mc B$ is a branch at $P \in \mc P$ on $U \in \mc U$, and all $g \in \Gal(F_\wp)$.
\end{prop}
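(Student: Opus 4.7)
The plan is to bootstrap from Theorem~\ref{general vK} by recognizing the groupoid pushout there as the fundamental groupoid of a profinite graph of groups on $\Gamma$, and then to read off the presentation at a basepoint via a maximal tree, using the profinite Bass-Serre formalism of \cite{Stix}.

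First, I would apply Theorem~\ref{general vK} to identify $\pi_1(F,\ms S)$ with the pushout of diagram~(\ref{groupoid diamond diag}). All four groupoids there have object set canonically in bijection with the edge set $\mc B$ of $\Gamma$, so the pushout is naturally the fundamental groupoid of a profinite graph of groups on $\Gamma$ with vertex groups $\Gal(F_\xi)$, edge groups $\Gal(F_\wp)$, and edge-to-vertex maps given by restriction. The chosen inclusions $\bar j_{\xi,\wp}:F_\xi^\sep \hookrightarrow F_\wp^\sep$ serve as rigidifying data: each identifies the object $F_\xi^\sep(\wp)$ of $\ms S_\xi$ with an abstract separable closure of $F_\xi$, so that the groupoid $\pi_1(F_\xi,\ms S_\xi)$ becomes equivalent to a group (the one-object groupoid $B\Gal(F_\xi)$), and the restriction morphisms across an edge $\wp$ incident to $\xi$ translate into precisely the homomorphisms $\alpha_{\wp,\xi}$ defined in the statement.

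Second, fixing a basepoint $\wp_0 \in \mc B$, the last sentence of Theorem~\ref{general vK} tells us that $\Gal(F)$ is the automorphism group of $\wp_0$ in the pushout. Choosing the maximal tree $\mc T$ and applying the profinite graph-of-groups presentation (Corollary~3.3 of \cite{Stix}) then expresses this automorphism group as the quotient of $\Asterisk_{\xi \in \mc P \cup \mc U} \Gal(F_\xi) \ast \wh\pi_1(\Gamma,\mc T)$ by the stated family of relations: tree edges $\wp \in \mc T$ are collapsed, contributing $e_\wp=1$ and the relation $\alpha_{\wp,U}(g)=\alpha_{\wp,P}(g)$, while non-tree edges contribute a free generator $e_\wp$ together with the twisted relation $\alpha_{\wp,U}(g)=e_\wp\alpha_{\wp,P}(g)e_\wp^{-1}$.

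The main obstacle is the careful translation between the two formalisms in the second step: one must match the objects of the groupoid pushout (indexed by $\mc B$) with the edges of the graph of groups, and verify that the direction conventions in the $\alpha_{\wp,\xi}$ --- which depend on the choice of $\bar j_{\xi,\wp}$ --- agree with those feeding into Stix's presentation, so that the relations come out in the stated twisted form rather than an inverse or opposite variant. Once this dictionary is in place, the presentation asserted in the proposition is exactly the output of Corollary~3.3 of \cite{Stix}, which is the form in which the result is cited here.
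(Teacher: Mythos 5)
Your proposal diverges from the paper's actual justification in a way that matters. The paper does not derive Proposition~\ref{free product quotient} from Theorem~\ref{general vK} at all: it obtains the presentation by applying Corollary~3.3 of \cite{Stix} \emph{directly} to the descent data furnished by patching (the reduction graph $\Gamma$ viewed as a one-dimensional simplicial complex with vertex groups $\Gal(F_\xi)$ and edge groups $\Gal(F_\wp)$), and the surrounding discussion explains why Stix's two-dimensional framework collapses to a graph in this setting because patching involves no self-intersections. Indeed, the paper explicitly remarks immediately after the proposition that ``it would be interesting to show that Proposition~\ref{free product quotient} can be deduced from Theorem~\ref{general vK}'' --- that is, the route you propose is precisely the deduction the authors flag as open and do not carry out.

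The gap in your argument is that the step you yourself identify as ``the main obstacle'' is the entire mathematical content of that open deduction, and you do not supply it. Passing from the groupoid pushout of diagram~(\ref{groupoid diamond diag}) to a presentation of the automorphism group of a single object requires a profinite groupoid-to-group van Kampen argument: a functor from a connected groupoid to $BG$ is not just a homomorphism from one automorphism group but carries additional data (one element of $G$ for each morphism $j_{\xi,\wp}$ identifying objects), and it is exactly the bookkeeping of this data relative to the maximal tree $\mc T$ that produces the generators $e_\wp$ and the twisted relations. Moreover, you cite Corollary~3.3 of \cite{Stix} as if it were a presentation theorem for vertex groups of groupoid pushouts; it is instead a statement about categories with descent data on simplicial complexes, which is why the paper applies it to the patching data rather than to the output of Theorem~\ref{general vK}. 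If you intend to invoke Stix anyway, the detour through Theorem~\ref{general vK} is superfluous and you should argue as the paper does; if you intend to genuinely deduce the presentation from the groupoid pushout, you must prove the profinite graph-of-groups presentation for that pushout, which your write-up only names.
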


Note that the situation considered in \cite{Stix} involved a connected simplicial complex of dimension at most two with associated groups and group homomorphisms associated to boundary maps.  This abstract situation is applied there to categories with descent data.  But while some descent categories involve self-intersections (e.g.\ $U \times_X U \to X$ for the \'etale topology on $X$), patching provides a descent context without such self-intersections.  As a result, the abstract framework described in \cite{Stix} simplifies in our situation, and it suffices to consider one-dimensional simplicial complexes, viz.~graphs, as above.  

Note also that if $\Gamma$ is a tree, and if compatible separable closures $F^\sep, F_\xi^\sep, F_\wp^\sep$ are chosen as in 
Theorem~\ref{dir lim patches thm}, then the above description of $\Gal(F)$ simplifies to the description given there.

It would be interesting to show that Proposition~\ref{free product quotient} can be deduced from 
Theorem~\ref{general vK}.

\bigskip

\noindent{\bf Author Information:}\\

\noindent David Harbater\\
Department of Mathematics, University of Pennsylvania, Philadelphia, PA 19104-6395, USA\\
email: harbater@math.upenn.edu

\medskip

\noindent Julia Hartmann\\
Department of Mathematics, University of Pennsylvania, Philadelphia, PA 19104-6395, USA\\
email: hartmann@math.upenn.edu

\medskip

\noindent Daniel Krashen\\
Department of Mathematics, University of Georgia, Athens, GA 30602, USA\\
email: dkrashen@math.uga.edu

\medskip

\noindent R.~Parimala\\
Department of Mathematics and Computer Science, Emory University, Atlanta, GA 30322, USA\\
email: parimala@mathcs.emory.edu

\medskip

\noindent V.~Suresh\\
Department of Mathematics and Computer Science, Emory University, Atlanta, GA 30322, USA\\
email: suresh@mathcs.emory.edu

\medskip

\noindent The authors were supported on NSF collaborative FRG grant: DMS-1463733 (DH and JH), DMS-1463901 (DK), DMS-1463882 (RP and VS).  Additional support was provided by NSF collaborative FRG grant DMS-1265290 (DH); NSF RTG grant DMS-1344994 (DK); NSF DMS-1401319 (RP); NSF DMS-1301785 (VS); and a Simons Fellowship (JH).


\begin{thebibliography}{CHHKPS17}

\bibitem[Art69]{artin} 
Michael Artin.
\newblock Algebraic approximation of structures over complete local rings.
\newblock  Publ.\ Math.\ IHES, vol.~36, 23--58, 1969.

\bibitem[Bo72]{Bo:CA} 
Nicolas Bourbaki.
\newblock {\em Elements of Mathematics: Commutative Algebra}.
\newblock Addison-Wesley, Reading, Massachusetts, 1972.

\bibitem[Br06]{Brown}
Ronald Brown.
\newblock {\em Topology and Groupoids}.
\newblock Revised version, 2006.  Available at 
\url{https://groupoids.org.uk/pdffiles/topgrpds-e.pdf}.  
Original edition published by McGraw-Hill, 1968.

\bibitem[CHHKPS17]{CHHKPS 0cyc}
Jean-Louis Colliot-Th\'el\`ene, David Harbater, Julia Hartmann, Daniel Krashen, R.~Parimala, V.~Suresh.
\newblock Local-global principles for zero-cycles on homogeneous spaces over arithmetic function fields.
\newblock 2017 manuscript. Available at arXiv:1710.03173.

\bibitem[FJ05]{FJ}
Michael Fried and Moshe Jarden.
\newblock {\em Field arithmetic}. Second edition. 
\newblock Ergebnisse der Mathematik, vol.~11. Springer-Verlag, Berlin, 2005.


\bibitem[Gr71]{SGA1} 
Alexander Grothendieck.
{\em Rev\^etements \'etales et groupe fondamental ({SGA} 1)}.
\newblock S{\'e}minaire de g{\'e}om{\'e}trie alg{\'e}brique du Bois Marie
  1960--61.
\newblock Lecture Notes in Mathematics, vol.~224, Springer-Verlag, 1971.  

\bibitem[Har87]{Ha:GCAL}
David Harbater.
\newblock Galois coverings of the arithmetic line.
\newblock In {\em Number Theory:  New
    York, 1984-85}.  Springer LNM, vol.~1240 (1987), pp.~165--195.
    
\bibitem[Har03]{Har:MSRI}
David Harbater.
\newblock Patching and Galois theory.
\newblock In {\em Galois groups and fundamental groups}, volume~41 of {\em
  Math. Sci. Res. Inst. Publ.}, pages 313--424. Cambridge Univ. Press,
  Cambridge, 2003.

\bibitem[HH10]{HH:FP}
David Harbater and Julia Hartmann.
\newblock Patching over fields.
\newblock Israel J.\ Math.\ \textbf{176} (2010), 61--107.

\bibitem[HHK09]{HHK}
David Harbater, Julia Hartmann, and Daniel Krashen.
\newblock Applications of patching to quadratic forms and central simple algebras.
\newblock Invent.\ Math.\ \textbf{178} (2009), 231--263.

\bibitem[HHK13]{HHK:Weier}
David Harbater, Julia Hartmann, and Daniel Krashen.  
\newblock Weierstrass preparation and algebraic invariants.
\newblock Math.\ Annalen \textbf{356} (2013), 1405--1424.

\bibitem[HHK14]{HHK:Hi}
David Harbater, Julia Hartmann, and Daniel Krashen. 
\newblock Local-global principles for Galois cohomology.
\newblock Comment.\ Math.\ Helv.\ \textbf{89} (2014), 215--253.

\bibitem[HHK15a]{HHK:H1}
David Harbater, Julia Hartmann, and Daniel Krashen.  
\newblock Local-global principles for torsors over arithmetic curves.
\newblock Amer.\ J.\ Math.\ \textbf{137} (2015), no.~6, 1559--1612.

\bibitem[HHK15b] {HHK:refinements} 
David Harbater, Julia Hartmann, and Daniel Krashen.
\newblock Refinements to patching and applications to field invariants.  
\newblock Int.\ Math.\ Res.\ Notices {\bf 2015} (2015), no.~20, 10399--10450.

\bibitem[HS05]{HS}
David Harbater and Katherine F.~Stevenson.
\newblock Local Galois theory in dimension two.
\newblock Advances in Math.\ \textbf{198} (2005), 623--653.

\bibitem[KMRT98]{KMRT}
Max-Albert Knus, Alexander Merkurjev, Markus Rost, and Jean-Pierre Tignol.
\newblock {\em The Book of Involutions}, volume~44 of American
  Mathematical Society Colloquium Publications.
\newblock American Mathematical Society, Providence, RI, 1998.

\bibitem[Lef99]{Lef}
Tamara Lefcourt.
\newblock Galois groups and complete domains.
\newblock Israel J.\ Math.\ \textbf{114} (1999), 323--346.     
    
\bibitem[Pop93]{Pop:SC}
Florian Pop.
\newblock The geometric case of a conjecture of Shafarevich. 
\newblock Heidelberg--Mannheim Preprint Reihe Arithmetik, No.~8, Heidelberg 1993.   
Available at 
\url{https://www.math.upenn.edu/~pop/Research/files-Res/GeomSC.pdf}.  


\bibitem[Stacks]{stacks-project}
The Stacks Project Authors.
{\em The Stacks Project}. 
\newblock \url{http://stacks.math.columbia.edu}, 2017.

\bibitem[Sti06]{Stix}
Jakob Stix.
\newblock A general Seifert-Van Kampen theorem for algebraic fundamental groups. 
\newblock Publ.\ Res.\ Inst.\ Math.\ Sci.\ \textbf{42} (2006), no.~3, 763--786.


\end{thebibliography}
\end{document}